\documentclass{amsart}
\usepackage{amsmath,amsthm,amssymb,hyperref}
\usepackage[utf8]{inputenc}
\usepackage[english]{babel}
\usepackage[T1]{fontenc}
\usepackage[babel=true]{microtype}
\usepackage[margin=3.5cm]{geometry}
\usepackage{enumerate}

\numberwithin{equation}{section}

\usepackage[backend=biber,style=alphabetic,giveninits]{biblatex}
\usepackage[dvipsnames]{xcolor}
\definecolor{mylightgray}{gray}{0.8}
\definecolor{mygray}{gray}{0.6}
\definecolor{mydarkgray}{gray}{0.4}
\definecolor{myverydarkgray}{gray}{0.2}

\usepackage{ytableau}

\usepackage[backgroundcolor = white]{todonotes}

\theoremstyle{plain}
\newtheorem{theorem}{Theorem}[section]
\newtheorem{lemma}[theorem]{Lemma}

\newtheorem{proposition}[theorem]{Proposition}

\theoremstyle{definition}
\newtheorem{definition}[theorem]{Definition}
\newtheorem{example}[theorem]{Example}
\newtheorem{notation}[theorem]{Notation}

\theoremstyle{remark}

\newtheorem{remark}[theorem]{Remark}
\newtheorem*{acknowledgements}{Acknowledgements}

\newcommand{\sym}[1][]{\ifstrempty{#1}{
		\ensuremath{S(\infty)}
	}{
		\ensuremath{S_{#1}(\infty)}
	}
}
\newcommand\rest[2]{\ensuremath{\left.{#1}\right|_{#2}}} 

\newcommand{\G}{\mathfrak{G}}
\newcommand{\g}{\mathfrak{g}}
\newcommand{\T}{\mathfrak{T}}

\DeclareMathOperator{\dom}{dom}
\DeclareMathOperator{\range}{range}

\DeclareMathOperator{\supp}{supp}
\DeclareMathOperator{\End}{End}

\DeclareMathOperator{\Ind}{Ind}
\DeclareMathOperator{\id}{id}
\DeclareMathOperator{\PB}{PB}
\DeclareMathOperator{\IrrAdm}{IrrAdm}
\DeclareMathOperator{\Adm}{Adm}
\DeclareMathOperator{\spanset}{span}

\title[Semigroup approach to admissible representations of \( S(\infty) \)]{Semigroup approach to admissible representations of \\ the infinite symmetric group}
\author{Irina Devyatkova}
\address{
	\newline
	Igor Krichever Center for Advanced Studies, Skolkovo Institute of Science and Technology,  Moscow, Russia
	\newline
HSE University, Moscow, Russia}
\email{irina.e.devyatkova@gmail.com}
\keywords{Infinite symmetric group, Young subgroups, admissible representations, semigroups}

\begin{document}

\maketitle

\begin{abstract}
	Let \( S(\infty) \) denote the group of finitary permutations of the set \(\mathbb N:=\{1,2,3,\dots\}\). It is a countable group admitting a lot of different topologies compatible with the group structure. In particular, such topologies arise from partitions of the set \(\mathbb N\) into blocks of infinite size. The corresponding categories of continuous unitary representations of \(S(\infty)\) were studied by Nessonov (Sbornik: Mathematics, 2012). We propose a different approach to his classification results based on the so-called semigroup method. Some additional information is also obtained.
\end{abstract}

\section{Introduction}
\subsection{}

Suppose that we have an infinite-dimensional group \( G \) (usually \( G \) is an inductive limit of some compact, finite or finite-dimensional groups \( G(n) \)) and suppose that we fix some subgroup \( K \subset G \) which would play the role of a maximal compact subgroup of \( G \). We define the notion of a \emph{tame representation} of a group \( K \), and consider unitary representations of \( G \) that become tame after being restricted to \( K \). Such representations are called \emph{admissible} representations of the pair \( (G,K) \). We will also call them \( K \)-admissible representations of \( G \). Different choices of \( K \) may lead to different classes of admissible representations.

Different examples of \( (G,K) \)-pairs were studied in~\cites{Olsh89,Olsh90,Olsh91,Neretin}.

The pair \( (G,K) \) often allows one to construct a series \( \G(n) \) of finite semigroups in some natural way. This semigroup approach can be extremely useful as there are important connections between the representations of thus obtained semigroups and \( K \)-admissible representations of \( G \).

In this paper we consider pairs \( (\sym, K^{\alpha}) \), where \( \sym \) is the group of all finite permutations of the set \( \mathbb{N}:=\{1,2,\ldots\} \), and \( K^{\alpha} \) is the Young subgroup of \( \sym \) that respects a partition \( \alpha \) of \( \mathbb{N} \) into countable sets \( \mathbb{N} = \bigsqcup \alpha_i \).

The total classification of admissible representations of these types of pairs was firstly obtained by Nessonov in~\cite{Nessonov}. We will give another proof of these results making use of the semigroup approach.

\subsection{}

Let us denote by \( S(X) \) the group of finite permutations on the set \( X \). When \( X = \mathbb{N} \) and \( X = [n] := \{1,2,\ldots, n\} \) we will write \( S(\infty) \) and \( S(n) \) respectively. It will be convenient to us to realise \( S(X) \) as the group of all strictly monomial matrices indexed by the set \( X \).

\begin{definition}
	A \emph{partition} of a countable set \( X \) is a  collection of countable sets \( \alpha = \{\alpha_i\}_{i\in I} \) such that
	\[
	X = \bigsqcup_{i\in I} \alpha_i.
	\]
	The set of all partitions of \( X \) will be denoted by \( \mathcal{P}(X) \). When \( X \) equals \( \mathbb{N} \) we will omit \( \mathbb{N} \) from the notation, and sometimes, when we want to fix an index set \( I \), we will write \( \mathcal{P}^{I}(X) \).
\end{definition}
Each \( \alpha\in \mathcal{P} \) defines a group topology on \( \sym \) in the following way.

\begin{definition}\label{Definition:Topology}
	For any \( \alpha\in \mathcal{P}^{I}(X) \) let \( K^{\alpha} \) denote the Young subgroup of \( S(X) \) corresponding to \( \alpha \). It consists of all permutations that respect the partition \( \alpha \). It is clear that \( K^{\alpha} \) is a direct sum of symmetric groups on the sets \( \alpha_i \)
	\[
	K^{\alpha} = \sum_{i\in I} S(\alpha_i).
	\]
\end{definition}

For a finite subset \( Y\subset X \) let us denote by \( \bar{Y} \) the set \( \mathbb{N}\setminus Y\), by \( S_Y(X) \) the subgroup of \( \sym \) of all finite permutations of the set \( \bar{Y} \) (in other words, all permutations fixing the set \( Y \) pointwise), and set \( K_{Y}^{\alpha} = K^{\alpha}\cap S_Y(X) \). When \( Y = [n]\subset \mathbb{N} \) we will write \( \sym[n] \) and \( K_n^{\alpha} \) respectively.

Suppose now that \( X = \mathbb{N} \).

 Groups \( K_{n}^{\alpha} \) form a decreasing chain
\[
K^{\alpha} = K_{0}^{\alpha} \supset K_{1}^{\alpha} \supset \cdots
\]
and satisfy the following conditions:
\begin{enumerate}
	\item \( \bigcap_{n\in \mathbb{N}} K_{n}^{\alpha} = \{1\}\)\label{Definition:Topology1};
	\item \( K_{m}^{\alpha}\cap S(n) \) is finite for any \( m\leqslant n \)\label{Definition:Topology2};
	\item \( K_{n}^{\alpha} \) and \( S(n) \) commute for all \( n\in \mathbb{N} \).\label{Definition:Topology3}
\end{enumerate}
The conditions above guarantee the existence of the unique group topology on \( \sym \) for which \( \{K_{n}^{\alpha}\} \) form a neighbourhood basis of unity. We will call it \emph{\( \alpha \)-topology}.

\begin{remark}
	The completion of \( \sym \) in this topology consists of all bijections \( g \) of \( \mathbb{N} \) such that for any \( i\neq j\in I \) the number \(d_{ij}(g) =  \left| \{k\in \alpha_j \mid g(k)\in \alpha_i\} \right| \) is finite, and for any \( i\in I \) we have
	\[
	\sum_{j\neq i} d_{ij}(g) = \sum_{j\neq i} d_{ji}(g).
	\]
\end{remark}

\subsection{}

\begin{definition}
	Let \( \alpha\in \mathcal{P} \). A unitary representation \(T\) of  \(\sym\) is said to be \( \alpha \)-\emph{admissible} if it is continuous in \( \alpha \)-topology.
\end{definition}

For any unitary representation \( T \) of \( \sym \) we will denote by \( H(T) \) the Hilbert space of \( T \), and by \( H_{X}^{\alpha}(T) \) its subspace of all \( K_{X}^{\alpha} \)-invariant vectors. Once again, when \( X = [n] \), we will write just \( H^{\alpha}_n(T) \). The representation \( T \) is \( \alpha \)-admissible if and only if the space
\[
H_{\infty}^{\alpha}(T) := \bigcup_{n\in \mathbb{N}} H_{n}^{\alpha}(T)
\]
is dense in \( H(T) \).

When the partition \( \alpha \) is fixed we will usually omit the upper index \( \alpha \) from the notation.

\begin{remark}
	When the representation \( T \) is irreducible, it is \( \alpha \)-admissible if and only if \( H_n^{\alpha}\neq 0 \) for some \( n \).
\end{remark}

Note that if we change \( \alpha \) in a finite way, the category of admissible representations will not change. More formally, suppose we have two partitions \( \alpha \) and \( \beta \). We are interested in the relations between the corresponding categories \( \Adm({\alpha})\) and \( \Adm({\beta}) \) of equivalence classes of admissible representations.

\begin{notation}
	For any finite set \( Y\subset \mathbb{N} \) let us denote by \( \alpha^{(Y)}\) the partition of \( Y^{c} \) obtained from \( \alpha \) by removing the set \( Y \). In other words, \( \alpha^{(Y)}_{i} = \alpha_{i}\cap \bar{Y} \). When \( Y = [n] \) we will write \( \alpha^{(n)} \).
\end{notation}
\begin{definition}
	Let \( \alpha\in \mathcal{P}^{I} \), \(\beta \in \mathcal{P}^{J} \) be two partitions of \( \mathbb{N} \).
	\begin{enumerate}
		\item  We say that two partitions \( \alpha \) and \( \beta \) are \emph{equivalent}, \( \alpha\sim \beta \), if \( \alpha^{(Y)} = \beta^{(Y)} \) for some finite set \( Y \).

		Note that \( \alpha\sim \beta \) if and only if \( K^{\alpha}_n = K^{\beta}_n \) for all \( n \) large enough. It follows that they yield the same categories \( \Adm({\alpha}) = \Adm({\beta})\).

		We will denote by \( [\alpha] \) the equivalence class of \( \alpha \).

		\item We say that \( \alpha \) is \emph{finer} than \( \beta \) (and that \( \beta \) is \emph{coarser} than \( \alpha \)) and write \( \alpha\succcurlyeq \beta \) if for any \( j\in J \) there exists \( I_j\subset I \) such that
		\[
		\beta_j = \bigsqcup_{i\in I_j} \alpha_i.
		\]
		Note that if \( \alpha \succcurlyeq \beta \), then \( K^{\alpha}\subset K^{\beta} \).

		We will write \( \alpha = \beta \) when \( \alpha \succcurlyeq \beta \) and \( \beta \succcurlyeq \alpha \), i.e., when \( \beta \) can be obtained from \( \alpha \) by re-indexing its parts.

		\item We say that class \( [\alpha] \) is \emph{finer} than class \( [\beta] \), \( [\alpha]\succcurlyeq [\beta] \), if \( \alpha^{(Y)}\succcurlyeq \beta^{(Y)} \) for some finite set~\( Y \). This definition does not depend on the choice of representatives \( \alpha \) and \( \beta \).

		Note that \( [\alpha]\succcurlyeq [\beta] \) if and only if \( K^{\alpha}_n \subset K^{\beta}_n \) for all \( n \) large enough, and if \( [\alpha]\succcurlyeq [\beta] \) and \( [\beta] \succcurlyeq[\alpha] \), then \( \alpha\sim \beta \).

		It follows that  \( \Adm({\alpha})\supset \Adm({\beta}) \) when \( [\alpha ]\succcurlyeq [\beta ]\).
	\end{enumerate}
\end{definition}

The structure of this paper is as follows. In Section~\ref{Section:Semigroups} we introduce semigroups of double cosets \( \G^{\alpha}(X) \) and describe the connection between their representations and \( \alpha \)-admissible representations of \( \sym \). In Section~\ref{Section:Spherical} we describe some particular subclass of \( \alpha \)-admissible representations, namely, spherical representations. The proof of the classification of \( \alpha \)-spherical representations using the semigroup approach was done by Neretin in~\cite{Neretin} and we will not repeat it. In Section~\ref{Section:Admissible} we use the classification of \( \alpha \)-spherical representations to give the semigroup-theoretical proof of the classification result of irreducible \( \alpha \)-admissible representations, and in Section~\ref{Section:Equivalence} give the necessary and sufficient conditions on unitary equivalence of two irreducible \( \alpha \)-admissible representations. Section~\ref{Section:Intersection} is an application of the classification result: we describe irreducible representations that are admissible with respect to two partitions simultaneously.

\begin{acknowledgements}
	This work was supported by the Russian Science Foundation under project~23--11--00150, \url{https://rscf.ru/en/project/23-11-00150/}. The author is grateful to G.~Olshanskii for stating the problem, useful discussions and careful reading of the paper.
\end{acknowledgements}

\section{Semigroups of double cosets}\label{Section:Semigroups}

In this section we describe the set \( \G^{\alpha}(X) \) of double cosets \( K_{X}^{\alpha}\backslash \sym / K_{X}^{\alpha} \) and endow it with a semigroup structure.

\subsection{}

Firstly, we agree that in this paper when we say ``semigroup'' we mean ``involutive semigroup with unity'', i.e., we require a semigroup \(\G\) to possess a unity \(1\) and an involutive anti-homomorphism \(\g\mapsto \g^{\ast}\).
\begin{definition}
	Let \(X\) be a set.
	\begin{enumerate}[(i)]
		\item  A \emph{partial bijection} of \(X\) is a bijection \(A\colon \dom(A) \rightarrow \range(A)\) between two (possible empty) subsets \(\dom(A), \range({A}) \subset X\). The set of all partial bijections of \(X\) will be denoted by~\(\PB(X)\).
		\item For  \(A_1, A_2 \in \PB(X)\) we define their product \(A_1 A_2\) in the natural way: \(A_1 A_2\) is defined on \(x\) whenever both \(A_2(x)\) and \(A_1(A_2(x))\) are defined,
		and for any \(x\in \dom(A_1A_2)\)
		\[
		A_1 A_2 (x) = A_1(A_2 (x)).
		\]
		\item Given \(A\in \PB(X)\) we define \(A^{\ast}\in \PB(X)\) such that \(\dom(A^{\ast}) = \range(A)\), \(\range(A^{\ast}) = \dom(A)\), and for any \(x\in \dom(A^{\ast})\)
		\[
		A^{\ast}(x) = A^{-1}(x).
		\]
	\end{enumerate}
	Under these operations \(\PB(X)\) becomes a semigroup with an involution \(A^{\ast}\) and a unity \(\id\colon X\rightarrow X\).
\end{definition}

\begin{notation}
	If \( X' \) is a subset of \( X \), we will denote by \( 1_{X'} \) an idempotent defined by
	\[
	1_{X'}(x) = \begin{cases}
		x, & x\in X';\\
		\text{not defined}, & x\notin X'.
	\end{cases}
	\]
\end{notation}

\begin{remark}
	It is convenient to realise semigroup \( \PB(X) \) as the semigroup of all not strictly monomial matrices, i.e., matrices \( A = (A_{xy})_{x,y\in X} \) defined by
	\[
	A_{xy} = \begin{cases}
		1& \text{when \( A(y) = x \)}\\
		0& \text{otherwise}.
	\end{cases}
	\]
	The product of partial bijections is the matrix product and \( A^{\ast} = A^t \) is the transpose of \( A \).
\end{remark}

Let us now describe the set of double cosets \( K_{X}^{\alpha}\backslash \sym /K_{X}^{\alpha} \) for some finite subset \( X \)  of \( \mathbb{N} \).

\begin{proposition}\label{Proposition:DoubleCosets}
	For any \( g\in \sym \) the double coset \( K_{X}^{\alpha}gK_{X}^{\alpha} \) is uniquely determined by the data \( \g = (A^{X}(g), B_i^{\alpha, X}(g), C_j^{\alpha, X}(g), d^{\alpha, X}_{ij}(g) \mid i,j\in I) \) defined as follows:
	\begin{enumerate}
		\item\label{DCData:X} \( A^{X}(g)\) is a partial bijection on the set \( X \) obtained from \( g \) by taking the submatrix of \( g \) corresponding to the set \( X \). More formally,
		\begin{gather*}
		 \dom(A^{X}(g)) = \{k\in X\mid g(k)\in X\},\quad \range(A^{X}(g)) = g(\dom(A^{X}(g))),\\ A^{X}(g)(k) = g(k);
		 \end{gather*}
		\item\label{DCData:A} \( B_i^{\alpha, X}(g)\) is a subset of \( X \) defined by \[ B_i^{\alpha, X}(g) = \{k\in X\mid g(k)\in \alpha_i^{(X)}\},\quad i\in I;\]
		\item\label{DCData:B} \( C_j^{\alpha, X}(g)\) is a subset of \( X \) defined by \[ C_{j}^{\alpha, X}(g) = \{k\in X\mid g^{-1}(k)\in \alpha_j^{(X)}\},\quad j\in I;\]
		\item\label{DCData:c} For \( i\neq j \) the number \(d_{ij}^{\alpha, X}(g)\) is the size of the finite set \[ D_{ij}^{\alpha, X}(g) = \{k\in \alpha_j^{(X)}\mid g(k)\in \alpha_i^{(X)}\},\quad i\neq j \] and \( d_{ij}^{\alpha, X}(g) = \infty \) when \( i = j \).
	\end{enumerate}
\end{proposition}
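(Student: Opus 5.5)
Two things must be shown: the data \( \g \) is an invariant of the double coset \( K_X^{\alpha} g K_X^{\alpha} \), and conversely two permutations \( g, g' \) with the same data lie in the same double coset.

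\emph{Invariance.} It suffices to check that each piece of data is unchanged under \( g\mapsto kg \) and \( g\mapsto gk \) for \( k\in K_X^{\alpha} \). Such a \( k \) fixes \( X \) pointwise and preserves every \( \alpha_i^{(X)} \). Hence:
\begin{itemize}
\item whether \( g(x)\in X \) and its value, and which block \( \alpha_i^{(X)} \) contains \( g(x) \), are unchanged for \( x\in X \);
\item symmetrically, the sets \( C_j \) are unchanged;
\item \( D_{ij}(kg)=D_{ij}(g) \), and \( D_{ij}(gk)=k^{-1}(D_{ij}(g)) \), which has the same cardinality.
\end{itemize}
Finiteness of \( d_{ij} \) for \( i\ne j \) holds because \( g \) moves only finitely many points.

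\emph{Completeness.} Given \( g,g' \) with equal data, I will construct \( k_1,k_2\in K_X^{\alpha} \) with \( g'=k_1gk_2 \). Equivalently, I need a bijection \( k_2 \) of \( \bar X \) preserving each block, with \( k_1:=g'k_2^{-1}g^{-1} \) fixing \( X \) and preserving blocks. Both maps must also be finitary.

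Decompose \( \bar X \) by source and target. Let
\[ E_{ij}(g)=\{y\in\alpha_j^{(X)} \mid g(y)\in\alpha_i^{(X)}\}, \qquad F_j(g)=\{y\in \alpha_j^{(X)}\mid g(y)\in X\}. \]
Then \( \alpha_j^{(X)}=F_j(g)\sqcup\bigsqcup_i E_{ij}(g) \). Here \( g \) maps \( F_j(g) \) bijectively onto \( C_j(g) \), so \( F_j(g)=g^{-1}(C_j) \). The same holds for \( g' \).

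Define \( k_2 \) on \( F_j(g') \) by \( k_2=g^{-1}g' \), which maps it onto \( F_j(g) \). On \( E_{ij}(g') \) with \( i\ne j \), let \( k_2 \) be any bijection onto \( E_{ij}(g) \); this is possible because both sets have size \( d_{ij} \). It remains to biject \( E_{jj}(g') \) onto \( E_{jj}(g) \). Both sets are cofinite in \( \alpha_j^{(X)} \), since \( g \) and \( g' \) are finitary. I choose the bijection to be the identity off a finite set; this is possible because the two complements in \( \alpha_j^{(X)} \) have the same finite cardinality, \( |C_j|+\sum_{i\ne j}d_{ij} \). All but finitely many blocks are untouched by \( g \) and \( g' \), so \( k_2 \) is finitary.

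Now check \( k_1 \):
\begin{itemize}
\item For \( x\in X \) with \( g^{-1}(x)\in X \), we have \( g^{-1}(x)\in\range A(g) \) and \( g'^{-1}(x)=g^{-1}(x) \), since \( A(g)=A(g') \). If instead \( g^{-1}(x)\in\alpha_j^{(X)} \), then \( x\in C_j \) and \( k_1(x)=g'g'^{-1}g\,g^{-1}x=x \). So \( k_1 \) fixes \( X \).
\item Points of \( \alpha_i^{(X)} \) are \( g \)-images either of \( B_i\subset X \), where \( k_2=\id \) on \( X \) and \( A,B_i \) agree, or of some \( E_{ij}(g) \), which \( k_2^{-1} \) sends to \( E_{ij}(g') \) and \( g' \) sends back into \( \alpha_i^{(X)} \). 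So \( k_1 \) preserves blocks.
\end{itemize}

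The main obstacle is the bookkeeping showing \( k_1 \) is well defined and fixes \( X \). The delicate case is \( x\in X \) with \( g^{-1}(x)\in X \) but \( g'^{-1}(x)\notin X \). This is excluded because \( \range A \), together with the sets \( C_j \), partitions \( X \) consistently for \( g \) and \( g' \); I will state this as a preliminary observation.
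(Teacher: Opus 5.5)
Your proof is correct and is essentially the paper's construction. In both, \(k_2\) equals \(g^{-1}g'\) on \(g'^{-1}(C_j)\), is an arbitrary bijection between the sets \(D_{ij}\), and is the identity elsewhere; the paper makes the diagonal sets finite by intersecting with \([M]\), while you use cofinite sets whose complements have equal size. The only real difference is that you set \(k_1 := g'k_2^{-1}g^{-1}\) and verify it lies in \(K_X^{\alpha}\), whereas the paper gives the other factor by an explicit piecewise formula and leaves the identity \(k_1h = gk_2\) to the reader (minor slip: in your first bullet it is \(x\), not \(g^{-1}(x)\), that lies in \(\range A(g)\)).
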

When there is no confusion we will omit \( \alpha \) from the notation. In the proof of the proposition we assume that \( \alpha \) is fixed.

\begin{proof}
	It is clear that this data does not change after multiplication by elements of \( K_{X} \) on either side.

	Now suppose that \( g, h\in \sym \) have the same data. We may assume that \( g,h \) lie in \( S(M) \) for some \( M \) big enough. Let us write \( D_{ii}(g) \) for the set \mbox{\( \{k\in \alpha_i^{(X)}\mid g(k)\in \alpha_i^{(X)}\} \cap [M]\)}. Note that \( \left|{D_{ii}(g)}\right| = \left|{D_{ii}(h)} \right| \) for all \( i\in I \).

	We want to find \( k_1, k_2 \in K_{X}\) such that \( k_1h = gk_2 \). Let us fix for any \( i, j \) some bijection \(\varphi_{ij}\) from the set \( D_{ij}(h) \) to the set \( D_{ij}(g) \). Then we set
	\begin{gather*}
		k_1(x) = \begin{cases}
			(gh^{-1})(x)& \text{for \( x\in h(B_i(h)) \)};\\
			(g\varphi_{ij} h^{-1})(x)& \text{for \( x\in h(D_{ij}(h)) \)};\\
			x& \text{otherwise}.
		\end{cases}\quad
		k_2(k) = \begin{cases}
			(g^{-1}h)(x)& \text{for \( x\in h^{-1}(C_i(h)) \)};\\
			\varphi_{ij}(x)& \text{for \( x\in D_{ij}(h) \)};\\
			x& \text{otherwise}.
		\end{cases}
	\end{gather*}
	It is easy to check that \( k_1, k_2 \) satisfy the desired properties.
\end{proof}

There is a more convenient way to write this data. Put \( \g = (A, (B_i), (C_i), (d_{ij})) \) as above. We will write \( A \) as a finite monomial matrix \( A = (A_{kl})_{k,l\in X} \), the set \( B_i \) as a row vector indexed by \( X \), \( B_i = (B_{ik})_{k\in X} \)
\[
B_{ik} = \begin{cases}
	1& \text{if \(k\in B_i  \)}\\
	0& \text{if \(k\notin B_i  \)}
\end{cases}
\]
 and the set \( C_j \) as a column vector \( C_j = (C_{jk})_{k\in X} \) in the similar way.

Now we may write \( \g\) as a following (infinite when \( I \) is infinite) matrix with coefficients in \( \mathbb{Z}_{\geqslant 0}\cup\{\infty\} \):
\begin{gather}
	\g = \begin{bmatrix}\begin{array}{c|c c c}
			A & C_{i_1} & C_{i_2} & \cdots\\
			\hline
			B_{i_1} & \infty & d_{i_1i_2} &\dots\\
			B_{i_2} & d_{i_2i_1} & \infty &  \dots\\
			\vdots & \vdots & \vdots &  \infty
	\end{array}\end{bmatrix}.
\end{gather}
More formally, \( \g \) becomes a matrix indexed by the set \( X\sqcup I \) defines as follows:
\begin{gather}
	\g_{ab} = \begin{cases}
		A_{ab}& \text{if \( a,b\in X \)};\\
		C_{ba} & \text{if \(  a\in X, b\in I \)};\\
		B_{ab} & \text{if \( a\in I, b\in X \)};\\
		d_{ab} & \text{if \( a,  b\in I \)}.
	\end{cases}
\end{gather}
For the sake of brevity, we will write
\[
\g = \begin{bmatrix}
	A & (C_{j})\\
	(B_{i}) & (d_{ij})\\
\end{bmatrix}.
\]

\begin{definition}
	For any finite set \( X\) and any decomposition \( \alpha\in \mathcal{P}^{I} \) let \( \G^{\alpha}(X) \) be the set of all matrices \( \g \) indexed by the set \( X\sqcup I \) with coefficients in \( \mathbb{Z}_{\geqslant 0}\cup\{\infty\} \) such that
	\begin{enumerate}
		\item for any \( k\in X \) the corresponding row \( (\g_{\ast k}) \) (and column \( (\g_{k\ast}) \)) has exactly one entry equal to \( 1 \) and all the rest equal to \( 0 \);
		\item for any \( i\in I \) the entry \( \g_{ii} \) equals \( \infty \);
		\item for any \( i\neq j\in I \) the entry \( \g_{ij} \) lies in \( \mathbb{Z}_{\geqslant0} \) and only finitely many of them are not zero;
		\item for any \( i\in I \) holds
		\[
		\sum_{k\in X}\g_{ik} + \sum_{I\ni j\neq i} \g_{ij} = \sum_{I\ni j\neq i} \g_{ji} + \sum_{k\in X} \g_{ki}.
		\]
	\end{enumerate}
\end{definition}

It is clear that for any such \( \g \) we can find an element \( g\in \sym \) such that \( K_{X}gK_{X} \) corresponds to \( \g \). So we have the following

\begin{proposition}
	For any partition \( \alpha \) of the set \( \mathbb{N} \) the map \( \theta^{\alpha,X}\colon g\mapsto \g \) described in the statement of Proposition~\ref{Proposition:DoubleCosets} gives a bijection between the set of double cosets \( K^{\alpha}_{X}\backslash \sym / K^{\alpha}_{X} \) and \( \G^{\alpha}(X) \).
\end{proposition}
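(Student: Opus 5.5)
The bijection splits into three claims: \( \theta^{\alpha,X} \) is well defined on double cosets, it is injective, and its image is exactly \( \G^{\alpha}(X) \).

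\emph{Well-definedness and injectivity.} Proposition~\ref{Proposition:DoubleCosets} states that the data \( \g \) is constant on each double coset \( K^{\alpha}_{X}gK^{\alpha}_{X} \) and determines that coset. So \( \theta^{\alpha,X} \) descends to an injective map on \( K^{\alpha}_{X}\backslash\sym/K^{\alpha}_{X} \). Nothing further is needed here.

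\emph{The image lies in \( \G^{\alpha}(X) \).} Take \( g\in\sym \) and a point \( k\in X \).
\begin{enumerate}
\item The image \( g(k) \) lies either in \( X \) or in exactly one part \( \alpha_i^{(X)} \). Hence column \( k \) of \( \g \) has exactly one entry \( 1 \), namely either \( A_{g(k)k}=1 \) or \( k\in B_i \). Applying the same argument to \( g^{-1} \) shows that row \( k \) also has exactly one entry \( 1 \), coming from \( A \) or from some \( C_j \).
\item The diagonal entries \( \g_{ii} \) equal \( \infty \) by definition.
\item Since \( g \) moves only finitely many points, each \( D_{ij}(g) \) with \( i\neq j \) is contained in the finite support of \( g \). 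So every \( d_{ij} \) is finite and only finitely many are nonzero.
\item For the balance condition, fix \( i \) and let \( M \) be a finite set containing \( X \) and the support of \( g \). Then \( g \) permutes \( M \), and so \( g \) restricts to a bijection between the sets
\[
\bigl(\alpha_i^{(X)}\cap M\bigr)\sqcup C_i
\quad\text{and}\quad
\bigl(\alpha_i^{(X)}\cap M\bigr)\sqcup B_i .
\]
Both sets are finite, and the parts of \( \alpha_i^{(X)}\cap M \) that \( g \) sends inside \( \alpha_i \) have equal size on the two sides. Comparing the remaining points gives
\[
\sum_{k}\g_{ik}+\sum_{j\neq i}\g_{ij}=\sum_{j\neq i}\g_{ji}+\sum_k\g_{ki}.
\]
\end{enumerate}

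\emph{Surjectivity.} This is the step that needs real work. Given \( \g\in\G^{\alpha}(X) \), we build \( g \) as follows.
\begin{enumerate}
\item On \( \dom A \), set \( g \) equal to \( A \).
\item For each \( k\in B_i \), choose a fresh point of \( \alpha_i^{(X)} \) to be \( g(k) \).
\item For each \( k\in C_j \), choose a fresh point \( x_k\in\alpha_j^{(X)} \) and set \( g(x_k)=k \).
\item For each \( i\neq j \), choose \( d_{ij} \) fresh points of \( \alpha_j^{(X)} \) and map them bijectively onto \( d_{ij} \) fresh points of \( \alpha_i^{(X)} \).
\end{enumerate}
Each \( \alpha_i \) is infinite, so fresh points are always available, and only finitely many choices are made in total. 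The first condition in the definition of \( \G^{\alpha}(X) \) guarantees that points of \( X \) are neither used twice nor left out.

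At this stage the construction is an injective partial map on a finite set. Inside each \( \alpha_i^{(X)} \), let \( P_i \) be the chosen points that are sources and \( Q_i \) the chosen points that are targets. Then
\[
|P_i| = |C_i|+\sum_{j\neq i}d_{ji}, \qquad |Q_i| = |B_i|+\sum_{j\neq i}d_{ij},
\]
so the balance condition says exactly that \( |P_i|=|Q_i| \). We may choose \( P_i \) and \( Q_i \) disjoint. To complete \( g \), send \( Q_i \) bijectively onto \( P_i \) and fix every other point. The added arrows stay inside \( \alpha_i^{(X)} \), so they change none of \( A \), \( B \), \( C \), \( d \).

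The resulting \( g \) is a finitary bijection of \( \mathbb{N} \) with \( \theta^{\alpha,X}(g)=\g \), which completes the bijection.
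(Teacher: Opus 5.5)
Your route is the paper's. Well-definedness and injectivity are exactly Proposition~\ref{Proposition:DoubleCosets}, and surjectivity is the step the paper dismisses as ``clear''. Your explicit construction for surjectivity is correct and supplies what the paper omits: you realise the arrows prescribed by \( A \), \( B_i \), \( C_j \), \( d_{ij} \) on fresh points, then close up each part by a bijection \( Q_i\to P_i \), which exists precisely because of the balance condition.

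The one step that fails as written is your check that the image satisfies condition (4). The map \( g \) does not restrict to a bijection between \( (\alpha_i^{(X)}\cap M)\sqcup C_i \) and \( (\alpha_i^{(X)}\cap M)\sqcup B_i \). These sets omit the points exchanged with other parts, and they can have different sizes. For instance, take \( X=\{1\} \), \( 2\in\alpha_i \), \( 3\in\alpha_j \) and \( g\colon 1\mapsto 2\mapsto 3\mapsto 1 \). Then \( B_i=\{1\} \) and \( C_i=\emptyset \), and the balance for \( i \) holds only because \( d_{ji}=1 \).

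The repair is a double count. Put \( U_i=\alpha_i^{(X)}\cap M \); then \( g \) maps \( g^{-1}(U_i) \) bijectively onto \( U_i \), and
\[
g^{-1}(U_i)=\bigl(U_i\cap g^{-1}(U_i)\bigr)\sqcup B_i\sqcup\bigsqcup_{j\neq i}D_{ij}(g),\qquad
U_i=\bigl(U_i\cap g^{-1}(U_i)\bigr)\sqcup g^{-1}(C_i)\sqcup\bigsqcup_{j\neq i}D_{ji}(g).
\]
Here one uses that \( g(M)=M \) and that every \( D_{ij}(g) \) lies in the support of \( g \). Comparing cardinalities gives \( |B_i|+\sum_{j\neq i}d_{ij}=|C_i|+\sum_{j\neq i}d_{ji} \), which is condition (4). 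This is the argument your phrase ``the parts that \( g \) sends inside \( \alpha_i \) have equal size on the two sides'' was reaching for. With this fix, the rest of your proof stands.
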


Now we want to define on \( \G^{\alpha}(X) \) a structure of a semigroup.

Let us once again omit \( \alpha \) from notation.
\begin{definition}
	For any permutation \( g \) in \( S(X) \) its \emph{support} is the set of all points in \( X \) that are not fixed under the action of \( g \).

	Let \( \mathfrak{f}, \mathfrak{h} \) be from \( \G(X) \), and \( f, h \) be their corresponding representatives in \( \sym \). We say that representatives \( f \) and \( h \) are \emph{in general position} if the size of the intersection of their supports is as small as possible. We define the multiplication in \( \G(X) \) as
	\[
	\mathfrak{f}\cdot \mathfrak{h} = \theta^{X}(fh).
	\]
\end{definition}

\begin{proposition}
	The multiplication is correctly defined (i.e., doesn't depend on the choice of representatives in general position) and is associative.
\end{proposition}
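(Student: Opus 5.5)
The plan is to compute \( \theta^{X}(fh) \) directly from the data of \( \mathfrak{f} \) and \( \mathfrak{h} \). This gives an explicit formula for the product that visibly does not depend on the representatives. Associativity will then come from the standard "general position" argument.

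\textbf{Step 1: reducing to representatives in general position.} Fix representatives \( f \) and \( h \) of \( \mathfrak{f} \) and \( \mathfrak{h} \). Multiplying \( f \) on the right by an element of \( K_X \), and \( h \) on the left by an element of \( K_X \), does not change the double cosets. Inside each block \( \alpha_i^{(X)} \) the supports of \( f \) and \( h \) are finite and the block is infinite. So we may conjugate the part of \( \operatorname{supp} h \) lying in \( \alpha_i^{(X)} \) away by some \( k\in K_X \), replacing \( h \) by \( khk^{-1} \); this changes neither \( \mathfrak{h} \) nor (being a change \( h\mapsto k h k^{-1} \)) the structure we need. More precisely, we replace the pair \( (f,h) \) by \( (fk,k^{-1}h') \) with \( h' \) chosen so that \( \operatorname{supp} f\cap\operatorname{supp} h\subset X \). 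This shows that the minimal possible intersection of supports is contained in \( X \).

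\textbf{Step 2: the product formula.} Under the condition \( \operatorname{supp} f\cap \operatorname{supp} h\subset X \), a point leaving \( X \) under \( h \) into \( \alpha_i^{(X)} \) is then fixed by \( f \). Tracking each point through \( fh \), I expect the block formula
\[
\mathfrak{f}\cdot\mathfrak{h}=\begin{bmatrix} A(f)A(h) & A(f)C_j(h)+C_j(f)\\ B_i(f)A(h)+B_i(h) & d_{ij}(f)+d_{ij}(h)+B_i(f)C_j(h)\end{bmatrix},
\]
with the convention \( \infty+x=\infty \). Here the matrix products are taken over \( X \) only. Deriving this formula is the main obstacle. Points of \( X \) can travel \( X\to\alpha\to\alpha \) or \( \alpha\to X\to\alpha \), and we have to check that no point passes through a block and returns. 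That holds because of the disjointness of supports outside \( X \): a point moved by \( h \) into \( \alpha_j^{(X)} \) lands outside \( \operatorname{supp} f \), so it stays in \( \alpha_j^{(X)} \). The term \( B_i(f)C_j(h) \) counts the points of \( \alpha_j^{(X)} \) that \( h \) sends into \( X \) and \( f \) then sends into \( \alpha_i^{(X)} \).

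\textbf{Step 3: independence and associativity.} The right-hand side of the formula depends only on \( \mathfrak{f} \) and \( \mathfrak{h} \), which gives correctness of the multiplication. It also follows that every pair of representatives in general position gives the same result.

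For associativity there are two routes. The first is to observe that for three classes we can choose \( f,h,g \) with \( \operatorname{supp} f \), \( \operatorname{supp} h \), \( \operatorname{supp} g \) pairwise intersecting only inside \( X \), by the same conjugation trick applied blockwise. Then \( (f,h) \) is in general position, and so is \( (fh,g) \), since \( \operatorname{supp}(fh)\subset \operatorname{supp} f\cup\operatorname{supp} h \). Likewise \( (h,g) \) and \( (f,hg) \) are in general position. Both bracketings therefore equal \( \theta^X(fhg) \). The second route is to verify associativity directly from the block formula, which is a routine matrix identity using bilinearity. I would present the first route and keep the formula as the definition used later.
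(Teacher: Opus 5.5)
Your proposal is correct, and its core matches the paper's proof. The paper also works with representatives whose supports do not meet outside \( X \). It records this as conditions (i)--(iii), e.g.\ \( h(B_i(h))\cap\supp(f)=\emptyset \) and \( D_{ij}(f)\cap\supp(h)=\emptyset \), all of which follow from your single condition \( \supp f\cap\supp h\subset X \). It then derives the same block formula by tracking points and reads off correctness from it.

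In Step 1, the sentence about \( (fk,k^{-1}h') \) is garbled and can be dropped. What makes the argument work is that \( \supp f\cap X \) depends only on \( \mathfrak f \), since elements of \( K_X \) fix \( X \) pointwise. So conjugating \( h \) by \( k\in K_X \) keeps the part of the intersection inside \( X \) and removes the rest. Hence a minimizing pair meets only in \( X \), and conversely.

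The genuine difference is associativity. The paper checks it directly on the formula; for instance, both bracketings give
\[
d_{ij}(f)+d_{ij}(h)+d_{ij}(g)+B_i(h)C_j(g)+B_i(f)C_j(h)+B_i(f)A(h)C_j(g).
\]
You instead realise the three classes by representatives with pairwise disjoint supports outside \( X \), and inherit associativity from the group \( S(\infty) \). Your route needs no computation and explains why the law is associative. It does apply the formula to pairs like \( (fh,g) \) that you only know meet inside \( X \). That is legitimate, because your Step 2 proves the formula under exactly that hypothesis. The paper's route stays at the level of the matrices and needs no further choice of representatives.
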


\begin{proof}
	Since \( f \) and \( h \) are in general position, we have
	\begin{enumerate}[(i)]
		\item \( h(B_i(h)) \cap \supp(f)  = f^{-1}(C_i(f))\cap\supp(h) = \emptyset\) for any \( i\in I \);\label{SemiMult:1}
		\item \( D_{ij}(f) \cap \supp(h) = D_{ij}(h) \cap \supp(f) = \emptyset\) for any \( i\neq j\in I \);\label{SemiMult:2}
		\item \( f(D_{ij}(f)) \cap \supp(h) = h(D_{ij}(h)) \cap \supp(f) = \emptyset\) for any \( i\neq j\in I \)\label{SemiMult:3}.
	\end{enumerate}
	Let now \( g = fh \). Then conditions above give us the following formulas (all multiplications are multiplications as matrices). We give below informal explanations, but the formulas can be checked directly.
	\begin{enumerate}[1.]
		\item \( A(g) = A(f)A(h) \). To see this, observe that condition~\eqref{SemiMult:1} ensures that any point \( k\in X \) that leaves the set \( X \) under the action of \( h \) cannot return back under \( f \). So we have \( \dom(A(g)) = \dom(A(h))\cap h^{-1}(\dom(A(f))) \) and \( A(g) = A(f)\cdot A(h) \) as partial bijections.
		\item \( B_i(g) = B_i(h) + B_i(f)A(h) \). An element \( k\in X \) moves to the set \( \alpha_i^{X} \) under the action of \( g \) in exactly two cases: either it is moved there by \( h \) (in which case \( f \) does not move it again), or it stays in \( X \) under \( h \) and is then moved to \( \alpha_i^{(X)} \) by \( f \).
		\item \( C_j(g) = C_j(f) + A(f)C_j(h) \). To see this, observe that \( C_{j}(g) = B_j(g^{-1})^{t} \).
		\item \( d_{ij}(g) = d_{ij}(h) + d_{ij}(f) + B_{i}(f)C_{j}(h) \). An element \( k\in \alpha_j^{(X)} \) moves to \( \alpha_i^{(X)} \) in three cases: it was moved there by \( h \) (and is left there by \( f \)); it was left in \( \alpha_j^{X} \) by \( h \), and then moved by \( f \); or it was moved by \( h \) to \( X \) and then moved to \( \alpha_i^{(X)} \) by \( f \).
	\end{enumerate}
	So we have the following formula for the multiplication:
	\begin{multline}\label{Equation:SemiGrMult}
		\begin{bmatrix}
			A(f) & (C_j(f))\\
			(B_i(f)) & (d_{ij}(f))\\
		\end{bmatrix} \cdot
		\begin{bmatrix}
			A(h) & (C_j(h))\\
			(B_i(h)) & (d_{ij}(h))\\
		\end{bmatrix} = \\
		=\begin{bmatrix}
			A(f)A(h) & (C_j(f) + A(f)C_j(h))\\
			(B_i(h) + B_i(f)A(h)) & (d_{ij}(f) + d_{ij}(h)+B_i(f)C_j(h))\\
		\end{bmatrix}.
	\end{multline}
	From this formula the correctness follows automatically and the associativity can be checked directly.
\end{proof}

When \( X \) is the empty set, we will write \( \G^{\alpha} \) instead of \( \G^{\alpha}(\emptyset) \).

\begin{example}
	The semigroup \( \G^{\alpha} \) is realised as the semigroup of matrices \( \mathfrak{g} = [d_{ij}]_{i\neq j\in I} \) with \( d_{ij}\in \mathbb{Z}_{\geqslant 0} \) satisfying the condition
	\[
	\sum_{j\neq i} d_{ij} = \sum_{j\neq i} d_{ji}
	\]
	and with semigroup operation given by matrix addition. It is therefore a commutative semigroup.
\end{example}

We define an involution on \( \G(X) \) by matrix transposition. One can see that \( (\theta_{X}(g))^{\ast} = \theta_{X}(g^{-1}) \).

There is another equivalent way to define multiplication on \( \G(X) \).

\begin{notation}
	Let \( Y \) be some finite subset of \( \mathbb{N} \), and let \( Y = X\sqcup X' \). We fix some element \( w^{\alpha}_{Y,X} \) of \( \sym \) such that
	\begin{enumerate}[(i)]
		\item \( w^{\alpha}_{Y,X} \) lies in \( K_{X} \);
		\item \( w^{\alpha}_{Y,X} (X') \subset \bar{Y} \);
	\end{enumerate}
\end{notation}

\begin{proposition}
	The multiplication in \( \G^{\alpha}(X) \) can be defined in the following way: let \( f,h \) be arbitrary elements of \( \sym \) and \( Y \) be such that \( X\subset Y \) and \( f,h\in S(Y) \). Then
	\[
	\theta^{\alpha, X}(f)\cdot\theta^{\alpha, X}(h) = \theta^{\alpha, X}(fw^{\alpha}_{Y,X}h).
	\]
	The result does not depend on the choice of \( Y \) and \( w^{\alpha}_{Y,X} \).
\end{proposition}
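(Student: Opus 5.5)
Since $w := w^{\alpha}_{Y,X}$ lies in $K_X$, we have $\theta^{X}(f w h) \in \theta^{X}(f K_X h)$, and $\theta^{X}(f)=\theta^{X}(fw)=\theta^{X}(wf)$ etc. The plan is to show that the pair $f' := f$ and $h' := w h w^{-1}$ are representatives of $\theta^X(f)$ and $\theta^X(h)$ in general position. Then, by definition of the multiplication,
\[
\theta^X(f)\cdot\theta^X(h)=\theta^X(f\,w h w^{-1})=\theta^X(fwh),
\]
the last equality because $w^{-1}\in K_X$ acts on the right. Independence of $Y$ and $w$ then follows automatically, since the left-hand side does not involve them (the multiplication was shown to be well defined in the preceding proposition).

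Clearly $h'=whw^{-1}\in K_X h K_X$, so it represents $\theta^X(h)$. Its support is $w(\supp h)\subset w(Y)=X\sqcup w(X')$ with $w(X')\subset\bar Y$. Since $\supp f\subset Y$, we get
\[
\supp f\cap\supp h'\subset (\supp f\cap\supp h\cap X)\cup\bigl(Y\cap w(X')\bigr)=\supp f\cap\supp h\cap X .
\]
Here we use that $w$ fixes $X$ pointwise, so a point $x\in X$ lies in $\supp h'$ iff it lies in $\supp h$.

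The key step is to check that this intersection is the minimal possible, or, more robustly, to avoid the word ``minimal'' and verify directly the conditions \eqref{SemiMult:1}--\eqref{SemiMult:3} used in the proof of formula \eqref{Equation:SemiGrMult}. Concretely:
\begin{itemize}
\item $h'(B_i(h'))\subset w(\bar X)\cap\supp h'\subset w(X')\subset\bar Y$, which misses $\supp f$;
\item $D_{ij}(h')$ and $h'(D_{ij}(h'))$ lie in $w(X')$, hence outside $Y\supset\supp f$;
\item symmetrically, $f^{-1}(C_i(f))$, $D_{ij}(f)$ and $f(D_{ij}(f))$ lie in $X'$, which misses $\supp h'\subset X\cup w(X')$.
\end{itemize}
These are exactly the hypotheses from which \eqref{Equation:SemiGrMult} was derived. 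Hence $\theta^X(fh')$ is given by the formula \eqref{Equation:SemiGrMult} in terms of $\theta^X(f)$ and $\theta^X(h)$, and therefore coincides with the product.

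The main obstacle is this bookkeeping: confirming that the general-position conditions (not literal minimality of the support intersection) are what the multiplication formula really uses. I would phrase the argument by appealing to the formula \eqref{Equation:SemiGrMult} derived under conditions \eqref{SemiMult:1}--\eqref{SemiMult:3}, since the preceding proposition already established that the product equals that formula.
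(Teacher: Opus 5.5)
Your proof is correct. The paper does not actually prove this proposition; it only calls it a straightforward but rather unpleasant check, which suggests computing the data $A$, $B_i$, $C_j$, $d_{ij}$ of $fwh$ directly and matching them against the multiplication formula.

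Your route is different and more economical. Conjugating $h$ by $w$ moves the part of $\supp h$ lying outside $X$ into $w(X')\subset\bar Y$, away from $\supp f$. Conditions (i)--(iii) from the preceding proof then hold for the pair $(f,\,whw^{-1})$, and the already-derived formula applies verbatim. Since $w^{-1}\in K_X$, you get $\theta^X(fwhw^{-1})=\theta^X(fwh)$, and independence of $Y$ and $w$ comes for free. Your inclusions are right:
\begin{itemize}
\item each of $h'(B_i(h'))$, $D_{ij}(h')$, $h'(D_{ij}(h'))$ lies in $\supp h'\cap\bar X\subset w(X')$;
\item each of $f^{-1}(C_i(f))$, $D_{ij}(f)$, $f(D_{ij}(f))$ lies in $\supp f\cap\bar X\subset X'$.
\end{itemize}

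You can also drop the hedge about minimality. For $k_1,k_2\in K_X$ one has $\supp(k_1fk_2)\cap X=\supp f\cap X$. The reason is that $k_2$ fixes $X$ pointwise, and a point of $X$ moved by $f$ lands either in $X$, where $k_1$ acts trivially, or in $\bar X$, which $k_1$ preserves. So any pair of representatives shares at least the points of $\supp f\cap\supp h\cap X$, and your pair shares exactly these. Hence $f$ and $whw^{-1}$ are in general position in the literal sense of the definition.

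The trade-off is this: the direct check would be self-contained. Yours relies on the earlier derivation of the formula from (i)--(iii), but in exchange it reduces the new work to a handful of support inclusions.
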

The proof of this proposition is a straightforward, but rather unpleasant check.

We will denote the image of \( w^{\alpha}_{Y,X} \) under \( \theta^{\alpha, Y} \) by \( \epsilon^{\alpha}_{Y,X} \). One can easily see that
\[
\epsilon^{\alpha}_{Y,X} = \begin{bmatrix}
	1_{X} & \big( X'\cap \alpha_i \big)\\
	\big( X'\cap \alpha_i \big) & \big(0\big)
\end{bmatrix}.
\]
\subsection{}
Let \(H\) be a complex Hilbert space of finite or countable dimension. A \emph{contraction} on \(H\) is an operator with norm \(\le1\). Let \(C(H)\) be the set of contractions. It is a semigroup with involution (the conventional conjugation of bounded operators). If \(H\) has countable dimension, we endow \(C(H)\)  with the weak operator topology.

By a \emph{representation} of a semigroup \(\G\) on \(H\) we mean a homomorphism \(\T\colon \G\to C(H)\) which preserves the unity and is compatible with the involution, that is,  \(\T(\g^\ast)=(\T(\g))^\ast\) for all \(\g\in\G\).

Let \( T \) be some unitary representation of \( \sym \). For any partition \( \alpha\in \mathcal{P} \) and a finite set  \( X \), let \( P^{\alpha}_{X} \) denote the orthogonal projection to the space \( H^{\alpha}_{X}(T) \) of \( K_X^{\alpha} \)-invariant vectors. Then for any \( g\in \sym \) the operator
\[
P^{\alpha}_{X}\rest{T(g)}{H^{\alpha}_{X}(T)}\colon H^{\alpha}_{X}(T)\rightarrow H^{\alpha}_{X}(T)
\]
depends only on the double coset \( K_{X}^{\alpha}gK_{X}^{\alpha} \). Thus we have a correctly defined map
\[
\T^{\alpha}_{X}\colon \G^{\alpha}(X) \rightarrow \End(H^{\alpha}_{X}(T)), \quad \T^{\alpha}_{X}\colon \theta^{\alpha, X}(g)\mapsto P^{\alpha}_{X}\rest{T(g)}{H^{\alpha}_{X}(T)}.
\]
Let us again omit \( \alpha \) from notations for the rest of this section.
\begin{proposition}
	For any unitary representation \( T \) of \( \sym \) and any finite set \( X \) the corresponding map \( \T_{X} \) is a representation of a semigroup \( \G(X) \). It is irreducible when \( T \) is irreducible, and if \( T \) is also admissible, then \( T \) is uniquely defined by \( \T_{X} \) for any \( X \) s.t. \( H_{X}(T) \) is not \( 0 \).
\end{proposition}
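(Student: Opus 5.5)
The proof has three parts: multiplicativity, irreducibility, and uniqueness. Throughout, write \( P = P_X \) and \( H_X = H_X(T) \).

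\textbf{Representation.} The map \( \T_X \) preserves unity, since \( \theta^{X}(1) \) is the unit of \( \G(X) \). It is compatible with the involution because \( (PT(g)P)^{\ast} = PT(g^{-1})P \) and \( \theta^{X}(g^{-1}) = \theta^{X}(g)^{\ast} \). Contractivity is clear.

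The main step is multiplicativity: \( PT(f)PT(h)P = PT(fw_{Y,X}h)P \), where \( f,h \in S(Y) \). I would express \( P \) as a limit of averages over \( K_X \). Take the finite groups \( K_X \cap S(M) \) for large \( M \); their averaging operators converge strongly to \( P \), by the mean ergodic theorem for the increasing union of finite groups. So it suffices to show that \( PT(f)\,T(k)\,T(h)P \) equals \( PT(fw_{Y,X}h)P \) for "most" \( k \in K_X\cap S(M) \).

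Every \( k \in K_X \) decomposes as \( k = k' k'' \), where \( k'' \in K_Y \) commutes with \( S(Y) \) (property (3) of the chain), and \( k' \) lies in a set of coset representatives of \( K_X/K_Y \). The factor \( k'' \) is then absorbed into \( P \) on either side. The real issue is the coset \( k' \). For \( k'\in K_X \) with \( k'(X')\subset \bar Y \), that is, \( k' \) in general position, the double coset of \( fk'h \) equals that of \( fw_{Y,X}h \). This is the proposition stating that the product is independent of \( w_{Y,X} \). As \( M\to\infty \), the proportion of \( k\in K_X\cap S(M) \) sending the finite set \( X' \) into \( Y \) tends to \( 0 \), because each block \( \alpha_i \) is infinite. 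Averaging therefore gives the identity in the limit.

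I expect this counting step to be the main obstacle. It needs care when the index set \( I \) is infinite, since points of \( X' \) may lie in different blocks. Still, each of the finitely many relevant blocks contributes a vanishing fraction, so the argument should go through.

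\textbf{Irreducibility.} Suppose \( V\subset H_X \) is a closed \( \T_X \)-invariant subspace with \( V\neq 0 \). Let \( W \) be the closed span of \( T(\sym)V \); it is a nonzero invariant subspace, so \( W=H(T) \) when \( T \) is irreducible. Now \( PW \) is contained in the closure of the span of \( PT(g)V = \T_X(\theta(g))V\subset V \). Hence \( H_X = PH(T) = PW \subset V \), and \( \T_X \) is irreducible.

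\textbf{Uniqueness.} Let \( T \) be irreducible admissible with \( H_X\neq0 \), and pick a unit vector \( v\in H_X \). The matrix coefficient \( \langle T(g)v,v\rangle = \langle \T_X(\theta(g))v,v\rangle \) is determined by \( \T_X \). Since \( T \) is irreducible, \( v \) is cyclic. By the GNS construction, \( T \) is then determined up to equivalence by this positive definite function, and hence by \( \T_X \).
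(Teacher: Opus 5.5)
Your proof is correct, but it organises the key step differently from the paper. The paper only sketches the argument. Following Olshanskii, it isolates the lemma \( P_Y T(w_{Y,X}) P_Y = P_X \) and gets multiplicativity from it in one line. Since \( f,h\in S(Y) \) commute with \( K_Y \), the operators \( T(f),T(h) \) commute with \( P_Y \), and \( P_XP_Y=P_X \), so
\[
P_XT(f)P_XT(h)P_X = P_XT(f)P_YT(w_{Y,X})P_YT(h)P_X = P_XT(fw_{Y,X}h)P_X .
\]

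You instead replace the middle \( P_X \) by the averaging operators \( E_M \) over the finite groups \( K_X\cap S(M) \). You then use the independence of the product from the choice of \( w_{Y,X} \) to identify almost every term with \( fw_{Y,X}h \).

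The two routes are closely related. The same averaging, applied to \( P_Y E_M P_Y \), proves the paper's lemma. There one needs only the simpler fact that every \( k\in K_X \) with \( k(X')\subset\bar Y \) satisfies \( \theta^{Y}(k)=\epsilon_{Y,X} \). You need the full independence statement, which the paper leaves as a ``rather unpleasant check'' (it does hold). The paper's organisation also yields \( \T_Y(\epsilon_{Y,X}) = P_{Y,X} \), which is used in the proof of Theorem~\ref{Theorem:Admissible1}. Yours makes it transparent that multiplicativity holds for every unitary \( T \), with no admissibility assumption.

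Two small remarks:
\begin{enumerate}
\item The splitting \( k=k'k'' \) is unnecessary. Any \( k\in K_X \) with \( k(X')\subset\bar Y \) is itself a legitimate choice of \( w_{Y,X} \).
\item In the uniqueness part, say explicitly what is being compared. If a unitary \( U \) intertwines the two semigroup representations \( \T_X \), then \( v \) and \( Uv \) give the same matrix coefficient, so the GNS argument applies. Only irreducibility, not admissibility, is needed for cyclicity of \( v \).
\end{enumerate}
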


The proof of this proposition is similar to the proofs of similar results for other pairs \( G, K \), for example, see~\cite[Theorem~2.5]{Olsh89}. The important step in the proof is the following
\begin{lemma}
	For any finite sets \( X \subset Y\subset \mathbb{N} \) holds
	\[
	P_{Y}T(w_{Y,X})P_{Y} = P_{X}.
	\]
	In particular, the orthogonal projection
	\[
	P_{Y, X}\colon H_{Y}(T)\rightarrow H_X(T)
	\]
	is given by \( \T_{Y}(\epsilon_{Y,X}) \).
\end{lemma}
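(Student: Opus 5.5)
The plan is to exhibit \(K_X\) as generated by \(K_Y\) together with conjugates of \(K_Y\) by \(w = w_{Y,X}\), and then use a standard ergodic/averaging argument.

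\emph{Step 1.} Since \(w\in K_X\) and \(K_Y\subset K_X\), we have \(T(w)\) and \(P_Y\) preserving \(H_X(T)\), and \(P_X\le P_Y\). Hence \(P_YT(w)P_Y\) acts as the identity on \(H_X(T)\). It therefore suffices to show that for \(\xi\in H_Y(T)\) orthogonal to \(H_X(T)\) we have \(P_YT(w)\xi=0\). Equivalently, the compression \(P_YT(w)P_Y\) kills \(H_Y\ominus H_X\).

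\emph{Step 2.} Use the free choice of \(w\) permitted by the Proposition on multiplication, namely that the result does not depend on \(Y\) or \(w_{Y,X}\). For every \(k\in K_Y\), the element \(kw\) again satisfies the defining conditions (i), (ii), and \(P_YT(kw)P_Y=P_YT(w)P_Y\) because \(T(k)P_Y=P_Y\). Also \(wk'\) works for \(k'\in K_Y\). So the operator \(Q:=P_YT(w)P_Y\) depends only on the double coset \(K_YwK_Y\).

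The key computation is in the semigroup \(\G(Y)\), by the multiplication formula \eqref{Equation:SemiGrMult}. One checks that \(\epsilon:=\epsilon_{Y,X}\) satisfies \(\epsilon^\ast=\epsilon\) and \(\epsilon\cdot\epsilon=\epsilon\). Idempotency: the \(A\)-block is \(1_X1_X=1_X\). The \(B\)-block is \((X'\cap\alpha_i)+(X'\cap\alpha_i)1_X=(X'\cap\alpha_i)\), since the second term vanishes on \(X'\). The \(C\)-block is similar. The \(d\)-block is \(0+0+B_i(\epsilon)C_j(\epsilon)=|X'\cap\alpha_i\cap\alpha_j|=0\) for \(i\ne j\). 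Since \(\T_Y\) is a \(\ast\)-representation by the preceding Proposition (whose multiplicativity is the part we may take from the semigroup structure), \(Q=\T_Y(\epsilon)\) is a self-adjoint idempotent contraction. Hence \(Q\) is an orthogonal projection, with range \(\{\xi\in H_Y: T(w)\xi=\xi\}\), since \(\|P_YT(w)\xi\|=\|\xi\|\) forces \(T(w)\xi=\xi\).

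\emph{Step 3.} Identify the range. A vector \(\xi\in H_Y\) fixed by \(T(w)\) is fixed by \(K_Y\) and \(wK_Yw^{-1}=K_{w(Y)}\); note \(w(Y)=X\sqcup w(X')\) with \(w(X')\) disjoint from \(Y\). The main obstacle is showing that \(K_Y\) and \(K_{X\cup w(X')}\) generate \(K_X\). I would do this directly. Any transposition in \(K_X\) swapping two points of one block \(\alpha_i\setminus X\) either avoids \(Y\), and so lies in \(K_Y\), or involves a point of \(X'\). In the latter case, conjugate using a transposition in \(K_Y\) moving the \(w(X')\)-point away; since blocks are infinite we can always choose a free point in the same block. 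Writing it as a product of transpositions lying in \(K_Y\) and \(K_{w(Y)}\) uses that \(w(x)\in\alpha_i\) whenever \(x\in X'\cap\alpha_i\), because \(w\in K_X\) respects the blocks. Thus the range is \(H_X\), giving \(P_YT(w)P_Y=P_X\). The second claim follows since \(\T_Y(\epsilon_{Y,X})\) is by definition \(P_YT(w_{Y,X})|_{H_Y}\).

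If relying on multiplicativity of \(\T_Y\) seems circular, I would instead prove \(Q^2=Q\) directly. Choose two copies \(w,w'\) with \(w(X')\), \(w'(X')\) and \(Y\) pairwise disjoint, and average \(P_Y\) as a weak limit of averages of \(T(k)\) over growing finite subgroups of \(K_Y\). Mixing then gives \(P_YT(w)P_YT(w')P_Y=P_YT(ww')P_Y\), and \(ww'\) lies in the same \(K_Y\)-double coset class as \(w\).
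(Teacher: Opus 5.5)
\textbf{Your primary route is circular.} The paper gives no proof of this lemma beyond a pointer to Olshanski. It uses the lemma as \emph{the} key step for multiplicativity of \(\T_X\): for \(f,h\in S(Y)\), which commute with \(K_Y\) and hence with \(P_Y\), one gets \(P_XT(f)P_XT(h)P_X=P_XT(f)P_YT(w_{Y,X})P_YT(h)P_X=P_XT(fw_{Y,X}h)P_X\). So the first version of your Step~2, which borrows multiplicativity of \(\T_Y\) from the preceding Proposition, is circular. For \(\T_Y\) it would even require the lemma for a larger pair \(Y\subset Y'\), an infinite regress.

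\textbf{The averaging fallback works after one correction.} Disjointness of \(w(X')\), \(w'(X')\) and \(Y\) does not place \(ww'\) in \(K_YwK_Y\). What is needed is \(w'(X')\cap w^{-1}(X')=\emptyset\). For instance, take \(X=\emptyset\), \(Y=\{1\}\), \(1,2,3\) in one block, \(w=(1\,2\,3)\), \(w'=(1\,3)\); then \(ww'\in K_Y\). It is cleaner to bypass \(ww'\) altogether:
\begin{itemize}
\item \(P_YT(w)P_YT(w')P_Y\) is the limit of the averages of \(P_YT(wkw')P_Y\) over \(k\in K_Y\cap S(N)\).
\item \(wkw'\in K_YwK_Y\) as soon as \(kw'(X')\) misses the finite set \(w^{-1}(X')\subset\bar Y\).
\item Since the blocks are infinite, this fails only for a proportion of \(k\) tending to \(0\), so the averages converge to \(Q\) in norm and \(Q^2=Q\).
\end{itemize}
Combine this with \(Q^\ast=Q\) (because \(w^{-1}\in K_YwK_Y\)), your fixed-point description of the range, and your generation of \(K_X\) by \(K_Y\) and \(wK_Yw^{-1}=K_{w(Y)}\), which is correct. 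Together these give a complete proof.

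\textbf{Comparison with the standard argument.} The weak-limit argument the paper defers to is shorter.
\begin{itemize}
\item Choose \(w_N\in K_YwK_Y\) with \(w_N(X')\) escaping to infinity, e.g.\ involutions swapping \(X'\) with far-away points of the same blocks.
\item For \(\xi\in H_Y\) one has \(Q\xi=P_YT(w_N)\xi\) for every \(N\).
\item Any weak cluster point \(\zeta\) of \(T(w_N)\xi\) is \(K_X\)-invariant: for fixed \(k\in K_X\), \(w_N^{-1}kw_N\in K_Y\) once \(w_N(X')\cap\supp(k)=\emptyset\).
\item Hence \(Q\xi=P_Y\zeta=\zeta\in H_X\). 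A contraction on \(H_Y\) that maps into \(H_X\) and is the identity there must equal \(P_X\).
\end{itemize}
This needs neither idempotency nor the generation lemma. Your route replaces the conjugation-to-infinity trick by two finite statements: an averaging argument of the same ergodic nature, and a combinatorial generation fact. It is longer. Its averaging step is in effect a direct proof of the needed case \(\T_Y(\epsilon)^2=\T_Y(\epsilon\cdot\epsilon)\) of multiplicativity, which is exactly why it escapes the circularity.
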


\section{Spherical representations}\label{Section:Spherical}

\subsection{}
\begin{definition}
	Let \( K \) be a subgroup of a group \( G \). We say that \( (G,K) \) is a \emph{Gelfand pair} if for any unitary representation \( T \) of \( G \)  and any \( g,h\in G \) the operators \( PT(g)P \) and \( PT(h)P \) commute. Here \( P \) is the orthogonal projection onto the space \( H(T)^{K} \) of \( K \)-invariant vectors.
\end{definition}

\begin{remark}
	Note that if \( (G,K) \) is a  Gelfand pair, it follows that for any irreducible unitary representation \( T \) the space \( H(T)^{K} \) is at most one-dimensional.
\end{remark}

\begin{definition}
	A unitary representation of \( G \) is a \emph{spherical representation} of a Gelfand pair \( (G,K) \) if it possesses a cyclic \( K \)-invariant vector \( \xi \) (it means that the orbit of \( \xi \) under the action of \( G \) is total in \( H(T) \)). We will always assume that \( \|\xi\| = 1 \).

	Such a vector will be called a \emph{spherical vector}, and the corresponding matrix element \( \varphi(g) := \langle T(g)\xi; \xi \rangle \) will be called a \emph{spherical function} of \( T \). It is well-known that any irreducible spherical representation if uniquely determined by its spherical function (see, for example,~\cite[Chapter~8]{BorOlsh})
\end{definition}

\begin{proposition}[cf. {\cite[Proposition~3.6]{Nessonov}}]
		For any partition \( \alpha\in \mathcal{P} \) the pair \( (\sym; K^{\alpha}) \) is a Gelfand pair.

		Let \( T \) be a spherical representation of the pair \( (\sym, K^{\alpha}) \). Then the corresponding representation \( \T \) of the semigroup \( \G^{\alpha} \) is one-dimensional and the spherical function \( \varphi(g) \) of \( T \) is given by the character of the representation \( \T \) of the semigroup \( \G^{\alpha} \).

\end{proposition}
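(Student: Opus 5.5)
The plan is to derive everything from the fact, recorded in the Example above, that \( \G^{\alpha} = \G^{\alpha}(\emptyset) \) is a \emph{commutative} semigroup. For \( X=\emptyset \) we have \( K_{\emptyset}^{\alpha}=K^{\alpha} \) and \( P_{\emptyset}=P \), the projection onto \( H(T)^{K^{\alpha}} \).

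\textbf{Step 1: the Gelfand property.} Let \( T \) be any unitary representation of \( \sym \). By the Proposition stated in Section~\ref{Section:Semigroups}, \( \T_{\emptyset}\colon \G^{\alpha}\to \End(H_{\emptyset}(T)) \) is a semigroup homomorphism, and
\[
\T_{\emptyset}(\theta^{\emptyset}(g)) = P\,T(g)\,P\big|_{H(T)^{K^{\alpha}}}.
\]
Take \( g,h\in\sym \). Since \( \theta^{\emptyset}(g)\cdot\theta^{\emptyset}(h)=\theta^{\emptyset}(h)\cdot\theta^{\emptyset}(g) \) (the product is addition of the matrices \( (d_{ij}) \)), we get
\[
PT(g)P\,PT(h)P = PT(h)P\,PT(g)P
\]
on \( H(T)^{K^{\alpha}} \). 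Both sides vanish on the orthogonal complement of \( H(T)^{K^{\alpha}} \), so the operators commute on all of \( H(T) \). This is exactly the definition of a Gelfand pair.

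The only nontrivial input is that \( \T_{\emptyset} \) is multiplicative, i.e. \( PT(f)PT(h)P = PT(fw h)P \) with \( w=w_{Y,\emptyset} \). I will take this from the earlier Proposition rather than reprove it. Should a sanity check be wanted, the Lemma \( P_YT(w_{Y,\emptyset})P_Y=P_\emptyset \) is what drives it: one inserts \( P=\) a weak limit of averages over \( K^{\alpha} \) and uses that conjugating by elements of \( K^{\alpha} \) moves the supports of \( f,h \) into general position. This step is the main obstacle, but it has already been handled in the paper.

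\textbf{Step 2: one-dimensionality in the spherical case.} Let \( \xi \) be a cyclic \( K^{\alpha} \)-invariant unit vector. The vectors \( PT(g)\xi \), \( g\in\sym \), span a dense subspace of \( PH(T)=H(T)^{K^{\alpha}} \), because \( P \) is continuous and the vectors \( T(g)\xi \) are total. By Step 1, \( PT(g)P \) commutes with \( PT(h)P \), and \( \xi \) is cyclic for the commutative family \( \{PT(g)P\} \). We want \( PT(g)\xi=\chi(g)\xi \).

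For irreducible \( T \) this follows directly from the Remark after the definition of a Gelfand pair (\( \dim H^{K}\le1 \)). In general I would argue as follows. Using \( \|PT(g)\xi\|^2=\langle PT(g^{-1})PT(g)\xi,\xi\rangle \) and commutativity, the commutative \( * \)-algebra generated by the \( PT(g)P \) acts on \( H^{K} \) with cyclic vector \( \xi \). I would then check that \( \xi \) is a joint eigenvector.

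I expect this not to hold for reducible spherical \( T \). The intended reading of the statement is presumably for irreducible spherical \( T \), where \( \dim H^{K}=1 \) by the Remark. I would state that restriction explicitly, so that \( \T \) is one-dimensional, \( \T(\theta(g))=\varphi(g) \), and \( \T \) is a character of the semigroup.

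\textbf{Step 3: the spherical function.} With \( H^{K}=\mathbb{C}\xi \), we have \( \T(\theta(g))\xi = PT(g)\xi=\langle T(g)\xi,\xi\rangle\xi=\varphi(g)\xi \). Hence \( \varphi \) factors through \( \G^{\alpha} \) and equals the character \( \g\mapsto\T(\g) \). Compatibility with the involution gives \( \varphi(g^{-1})=\overline{\varphi(g)} \), and multiplicativity gives \( \varphi(fwh)=\varphi(f)\varphi(h) \).
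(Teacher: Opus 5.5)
Your proof is correct and follows the paper's route: the commutativity of $\G^{\alpha}$ is transferred to the operators $PT(g)P$ through the semigroup representation $\T_{\emptyset}$, and then $\varphi$ is read off on the line $H(T)^{K^{\alpha}}=\mathbb{C}\xi$. Both of your refinements are justified. Step~1 covers arbitrary unitary $T$, as the definition of a Gelfand pair requires, whereas the paper writes it only for irreducible $T$. Irreducibility is also genuinely needed in the second part: for two inequivalent irreducible $\alpha$-spherical representations $T_1,T_2$, the sum $T_1\oplus T_2$ with $\xi=(\xi_1+\xi_2)/\sqrt{2}$ is spherical in the paper's sense (it is cyclic because $T_1,T_2$ are disjoint), yet $H^{K^{\alpha}}$ is two-dimensional and $\varphi=(\varphi_1+\varphi_2)/2$ is not multiplicative on $\G^{\alpha}$.
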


\begin{proof}
	Recall that \( \G^{\alpha} \) is a commutative semigroup. So for any irreducible representation \( T \) of \( \sym \) the operators \( PT(g)P = \T(\theta^{\alpha}(g)) \) commute.

	The second part follows immediately from the fact that \( \xi \) is \( K \)-invariant.
\end{proof}

The total classification of spherical representations of the pair \( (\sym, K^{\alpha}) \) was first obtained by Nessonov in~\cite{Nessonov}. In~\cite{Neretin} Neretin gives another proof, using the semigroup approach. Strictly speaking, Neretin proves this result only in the case of partitions into finitely many parts (i.e., \( |I| = m<\infty \)), but in fact, all of his arguments work for the infinite case.

For the sake of brevity we will call unitary representation \( T \) of \( \sym \) that is spherical to a pair \( (\sym, K^{\alpha}) \) an \( \alpha \)-spherical representation. Now we will show how to construct irreducible \( \alpha \)-spherical representations of \( \sym \).

\subsection{}
Let us recall the definition of the countable tensor product of Hilbert spaces.

Suppose we have a collection of Hilbert spaces \( (V)_{k\in \mathbb{N}} \) and let us fix some unit vectors \( \xi_k\in V_k \).

For any \( n\leqslant m \) we define an inclusion
\[
\bigotimes_{i = k}^{n} V_k \rightarrow \bigotimes_{k = 1}^{m} V_k
\]
 by
\[
v_1\otimes \cdots \otimes v_n \mapsto v_1\otimes \cdots \otimes v_n\otimes \xi_{n+1} \cdots \otimes \xi_m.
\]
The inductive limit of this direct system is an inner product space and we will denote its completion by \( \bigotimes_{k\in \mathbb{N}} (V_k, \xi_k) \).

When all \( V_k \) are copies of the same Hilbert space \( V \) we will write \( V^{\otimes \infty}(\xi) \), where \( \xi \) denotes the vector \( \xi = \xi_1\otimes \xi_2\otimes \ldots \)

Let us choose in each \( V_k \) some orthonormal basis \( \xi_k = e^{(k)}_0, e^{(k)}_1, e^{(k)}_2, \ldots \) Then \( \bigotimes_{k\in \mathbb{N}} (V_k, \xi_k) \) has an orthonormal basis \( e_f \)
\[
e_f = e^{(1)}_{f(1)}\otimes e^{(2)}_{f(2)}\otimes \cdots,
\]
with almost all values \( f(k) \) equal to \( 0 \).

\subsection{}

Now we will construct some representations.

Fix a partition \( \alpha \in \mathcal{P}^{I} \).

Let \( V \) be a Hilbert space, and \( (v_{i})_{i\in I} \) be some unit vectors generating \( V \) and such that \( v_i \) and \( v_j \) are not collinear for \( i\neq  j \). We take a countable number of copies of \( V \) and choose \( \xi_k = v_i \)  when \( k \) lies in \( \alpha_{i} \).

The infinite symmetric group \( S(\infty) \) acts on the space \( V^{\otimes \infty}(\xi) \) by permuting the terms.

 Let \( H \) be the closure of the cyclic span of \( \xi \) under this action, i.e., \( H = \overline{\spanset\{T(g)\xi \mid g\in \sym\}} \), and let \( S \) be the restriction of the representation above to \( H \). The following theorem holds.

 \begin{theorem}\label{Theorem:Spherical}
 	\begin{enumerate}[1.]
 		\item This representation is irreducible. It is \( \beta \)-spherical for any \( \beta\succcurlyeq \alpha \).

 		\item (cf.~\cite[Theorem~3.4]{Nessonov}) Its spherical function is
 		\[
 		\varphi(g) = \prod_{i\neq j \in I} \langle v_j, v_i\rangle^{d_{ij}^{\alpha}(g)}
 		\]
 		(here we assume that \( 0^0 = 1 \)).

 		It follows that \( S \) is uniquely determined by a partition \( \alpha \) and an \mbox{\( I\times I \)} Gram matrix \( G = ( \langle v_j, v_i \rangle )_{i,j\in I} \). We will denote this representation by \( S(\alpha, G) \).

 		\item (cf.~\cite[Proposition~4.10]{Nessonov}) Two representations \( S(\alpha, G) \) and \( S(\alpha, G') \) are equivalent if and only if there is a complex diagonal matrix \( D = (d_i)_{i\in I} \) with \( |{d_i}| = 1 \), such that \( G' = DGD^{-1} \). In other words, if and only if the corresponding systems of vectors \( (v_i)\), \((v_i') \) can be obtained from each other by composition of an isometry and a diagonal map \( v_i\mapsto d_{i}v_i \), \( |d_i| = 1 \).

 		This gives us an equivalence relation on Gram matrices. We will denote by \( \mathcal{G}^{I} \) the set of some representatives of these equivalence classes.

 		\item (cf.~\cite[Theorem~3.7]{Nessonov},~\cite[Theorem~8.1]{Neretin}) All \( \beta \)-spherical representations can be obtained in this way. I.e., if \( S \) is some \( \beta \)-spherical representation, then there is a partition \( \alpha\in \mathcal{P}^{I} \) such that \( \alpha\preccurlyeq \beta \) and a Gram matrix \( G\in \mathcal{G}^{I} \) such that \( S \) is equivalent to \( S(\alpha, G) \).
 	\end{enumerate}

 	\begin{proof}
 		We will not prove the last statement.
 		\begin{enumerate}[1.]
 			\item 	The space of \( K_n^{\alpha} \)-invariant vectors in \( V^{\otimes \infty}(\xi) \) coincides with \( V^{\otimes n} \). It follows that \( \xi \) is the only (up to multiplication by scalar) \( K^{\alpha} \)-invariant vector in \( V^{\otimes \infty}(\xi) \), hence \( S \) is irreducible. Indeed, if \( H(S) = H\oplus H' \) and \( P \) is the orthogonal projection to \( H \), then \( P(\xi) \) is \( K^{\alpha}\)-invariant and, therefore, either \( H = H(S) \) or \( H' = H(S) \). It is also clear that \( \xi \) is \( K_{\beta} \)-invariant for any partition \( \beta \) finer then \( \alpha \).

 			\item The spherical function is
 			\begin{gather*}
 				\varphi(g) = \langle T(g)\xi, \xi \rangle = \prod_{k} \langle \xi_{g^{-1}(k)}, \xi_{k} \rangle = \prod_{i\neq j} \langle v_j, v_i\rangle^{d_{ij}^{\alpha}(g)}.
 			\end{gather*}

 			\item In~\cite[Lemma~8.2]{Neretin} Neretin shows that the semigroup \( \G^{\alpha} \) is generated by cycles \( \mathfrak{s}[i_1, \ldots, i_p] \) that are defined as follows: if we write \( \G^{\alpha} \) as a semigroup of matrices, then any element from \( \G^{\alpha} \) can be written as a linear combination (with coefficients in positive integers) of elementary matrices \( E_{ij} \) for \( i\neq j \) (with \( \infty \) on the diagonal). Then for any \( p\geqslant 2 \) and any pairwise distinct \( i_1, \ldots, i_p \in I\) we define cycles as elements
 			\[
 			\mathfrak{s}[i_1\ldots, i_p] = E_{i_1i_2} + E_{i_2i_3} + \cdots + E_{i_pi_1}.
 			\]
 			Any element \( \mathfrak{g}\in \G^{\alpha} \) can be represented as a finite product of cycles.

 			Now, two irreducible spherical representations \( S \) and \( S' \) are equivalent if and only if their corresponding spherical functions \( \varphi \) and \( \varphi' \) are the same. Or, equivalently, if the characters \( \chi, \chi' \) of the semigroup \( \G^{\alpha} \) are the same. Now, it is enough to compute the value of \( \chi \) and \( \chi' \) on cycles \( \mathfrak{s}[i_1, \ldots, i_p] \). We have the following formula
 			\[
 			\chi(\mathfrak{s}[i_1, \ldots, i_p]) = g_{i_1i_2}\cdots g_{i_pi_1}.
 			\]
 			So we are left to prove the following.
 			\begin{lemma}
 				Let \( G, G' \) be two Gram matrices indexed by the set \( I \) such that \( g_{ii} = g'_{ii} = 1 \). Then \( G' = DGD^{-1} \) for some diagonal complex matrix \( D \), \( |d_i| = 1 \), if and only if for any \( p\geqslant 2 \) and any pairwise distinct \( i_1, \ldots, i_p\in I \) the following  holds:
 				\begin{gather}
 				g_{i_1i_2}\cdots g_{i_pi_1} = g'_{i_1i_2}\cdots g'_{i_pi_1} \label{Equation:Cycles}.
 				\end{gather}
 			\end{lemma}
 			\begin{proof}
 				Note that the condition \( G' = DGD^{-1} \) means that \( g'_{ij} = g_{ij}\frac{d_i}{d_j} \) for any \( i,j\in I \).

 				The ``only if'' part is trivial. Now suppose that~\eqref{Equation:Cycles} is satisfied. In particular, \( |g_{ij}|^2 = |g'_{ij}|^2 \)  and \( g_{ij} = 0 \) if and only if \( g'_{ij} = 0 \).

 				Let \( \Gamma \) be undirected graph with vertices indexed by \( I \) and such that two vertices \( i, j \) are connected if and only if \( g_{ij}\neq 0 \). Let \( C \) denote the set of all connected components \( c \) and let us  choose in every component \( c \) a spanning tree \( \Gamma_c \) with a fixed root \( i_c \). For any \( i\in c \) we set the value \( d_i \) by induction on the length of the path from \( i_c \) to \( i \). We set \( d_{i_c} = 1 \) and for any path \( i_c,\ldots, j, i \) we set \( d_i = d_j\frac{g'_{ij}}{g_{ij}} \).

				We defined all \( d_i \) for \( i\in c \), and if \( i \) and \( j \) are connected by an edge in the tree we have the required property. Now suppose that \( i \) and \( j \) are connected in \( \Gamma \), but not in \( \Gamma_c \). Then if we add this edge to \( \Gamma_c \) we will obtain a cycle \( j = i_1, i_2, \ldots, i_p = i \). Applying~\eqref{Equation:Cycles} we obtain
				\[
				\frac{g'_{ij}}{g_{ij}} = \frac{g_{i_1i_2}\cdots g_{i_{p-1}i_p}}{g'_{i_1i_2}\cdots g'_{i_{p-1}i_p}} = \frac{d_{i_2}\cdots d_{i_{p}}}{d_{i_1}\cdots d_{i_{p-1}}} = \frac{d_i}{d_j}.
				\]
 			\end{proof}
 		\end{enumerate}
 	\end{proof}

 	\begin{remark}
 		In the construction of \( S(\alpha, G) \) we demanded that \( v_i, v_j \) must not be collinear for \( i\neq j \). Suppose that \( \tilde{S} \) is a representation constructed from partition \( \beta \) and vectors \( v_i \) in the manner described above, but without the non-collinearity condition on \( v_i \). Then \( \tilde{S} \) is equivalent to \( S(\alpha, G) \), where \( \alpha\preccurlyeq \beta \) is obtained from \( \beta \) by uniting all \( \alpha_i, \alpha_j \) such that \( \mathbb{C}v_i = \mathbb{C}v_j \), and \( G \) is the corresponding Gram matrix.

 		We could describe \( \beta \)-spherical representations either as \( S(\alpha, G) \) for \( \alpha\preccurlyeq\beta \) with non-collinear condition or as representations \( S(\tilde{G}) \) depending only on Gram matrix \( \tilde{G} \) without the condition. For several reasons, the first description is more convenient to us.
 	\end{remark}
 	\begin{example}\label{Example:Spherical}
 		When \( \beta \) is a partition of \( \mathbb{N} \) onto two sets \( \beta_1, \beta_2 \) any \( \beta \)-spherical representation is uniquely determined by a real number \( a\in [0,1] \). Indeed, this representation is defined by the value \( g_{12} = \langle v_2, v_1 \rangle \). But since we may multiply \( v_2 \) by any scalar \( |d| = 1 \), we may assume that \( g_{12} \) is real and positive. The value \( g_{12} = 1 \) corresponds to the trivial representation, \( g_{12} = 0 \) to the  representation induced from the trivial representation of \( K^{\beta} \).
 	\end{example}
 \end{theorem}

\section{Admissible representations}\label{Section:Admissible}
In this section we construct some representations \( T(\alpha, n, \lambda, G) \) and later prove that any \( \beta \)-admissible representation can be obtained in this way.

Let us fix some finite number \( n \). The irreducible representation of \( S(n) \) corresponding to a Young diagram \( \lambda \) will be denoted by \( R^{\lambda} \).

The pair \( (\sym[n], K^{\beta}_n) \) is a Gelfand pair and their spherical representations are determined by some partition \( \alpha^{(n)}\in \mathcal{P}^{I}(\bar{[n]}) \) such that \( \alpha^{(n)}\succcurlyeq \beta^{(n)} \) and \( G\in \mathcal{G}^{I} \). We will denote this representation by \( S(\alpha^{(n)}, G) \).

 We can construct the induced representation
\[
T(\alpha, n, \lambda, G) = \Ind^{\sym}_{S(n)\sym[n]} R^{\lambda} \otimes S(\alpha^{(n)}, G).
\]
\begin{theorem}\label{Theorem:IrrAndAdm}
	The representation \( T(\alpha, n, \lambda, G) \) is irreducible and \( \alpha \)-admissible.
\end{theorem}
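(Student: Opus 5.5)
Set $H_0 = H(R^{\lambda}) \otimes H(S(\alpha^{(n)},G))$, viewed as a representation of $S(n)\times\sym[n]$, and let $\xi$ be the spherical vector of $S(\alpha^{(n)},G)$. I would prove admissibility and irreducibility separately. Irreducibility goes by Mackey's criterion for induction from the (non-normal) subgroup $H := S(n)\sym[n]$ of the countable group $\sym$.

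\emph{Admissibility.} Consider the vectors $u\otimes\xi\in H_0$, placed in the summand of the induced space corresponding to the trivial coset. The spherical vector $\xi$ is invariant under $K^{\beta}_n$ for the relevant $\beta\preccurlyeq\alpha^{(n)}$ on $\overline{[n]}$, and in particular under $K^{\alpha}_n$, which lies inside $\sym[n]$ and commutes with $S(n)$. So these vectors lie in $H_n^{\alpha}(T)$. For a general coset $gH$, choose a finite $m$ with $g\in S(m)$. Then $K^{\alpha}_m\subset K^\alpha_n$ commutes with $g$, so $T(g)(u\otimes\xi)\in H^{\alpha}_m(T)$. Since $\xi$ is cyclic for $\sym[n]$ and $R^\lambda$ is irreducible, the vectors $T(g)(u\otimes\xi)$ are total in the induced space. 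Hence $H^\alpha_\infty(T)$ is dense, and $T$ is $\alpha$-admissible. By the Remark in the introduction, for irreducibility it would also suffice to know $H^\alpha_n\neq 0$.

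\emph{Irreducibility.} I would apply the Mackey criterion for countable discrete groups. $T$ is irreducible provided two conditions hold. First, $H_0$ is irreducible as an $H$-representation; this holds because it is an outer tensor product of irreducibles, using Theorem~\ref{Theorem:Spherical} for the second factor. Second, for every $g\notin H$, the representations $\sigma$ and $\sigma^g$ of $H\cap gHg^{-1}$ must be disjoint, or, more precisely, the space of intertwiners between them must be zero. Double cosets $HgH$ are indexed by $k=|g([n])\setminus[n]|$, with $1\le k\le n$. For $g$ with $k\ge1$, the group $H\cap gHg^{-1}$ contains $L:=S_{Y}(\infty)$ with $Y=[n]\cup g([n])$, which is the finitary symmetric group on the complement of a finite set. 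The restriction of $\sigma$ to $L$ is a multiple of $S(\alpha^{(n)},G)|_L$. The restriction of $\sigma^g$ to $L$ is a multiple of a restriction of $S(\alpha^{(n)},G)$ conjugated by $g$. On $L$ these two restrictions decompose via the tensor product picture. In $V^{\otimes\infty}$ the factor at positions $g([n])\setminus[n]$ appears as a genuine tensor factor in one and not in the other. The plan is therefore to compare the subspaces of $K^\alpha_Y$-invariant vectors, i.e. to use the semigroup representations $\T_Y$ of $\G^\alpha(Y)$ from Section~\ref{Section:Semigroups}. Roughly, the decomposition of $S(\alpha^{(n)},G)$ restricted to $\sym[n+k]$ involves the extra $S(k)$ acting nontrivially on $V^{\otimes k}$ together with $\sym[n+k]$. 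For $\sigma^g$, the points of $[n]$ moved outside act through $R^\lambda$ rather than through the tensor factors. I would show disjointness by exhibiting an element of $H\cap gHg^{-1}$ that acts differently. Take a transposition $s$ exchanging a point $p\in g([n])\setminus[n]$ with a far point $q$ of the same block $\alpha_i$. On the $\sigma^g$ side, $s$ acts through $\sigma$ of $g^{-1}sg$, which is a transposition of a point of $[n]$ with a point outside; this element is not in $H$, which contradicts $s\in H\cap gHg^{-1}$. So I would first carefully identify $H\cap gHg^{-1}$. It is $(S(n)\cap gS(n)g^{-1})$ times permutations of the appropriate pieces. Intertwiners should then be killed by asymptotic behaviour. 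Let $q\to\infty$ within $\alpha_i$. Then $\sigma(s_{pq})\to$ (weakly) the operator given by the projection tied to $\xi$, by the spherical-function formula. On the $\sigma^g$ side, the same limit passes through $R^\lambda$ and the projection onto $H^\alpha_n$, and these limits differ.

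The main obstacle is this disjointness step, in particular handling the weak limits $\lim_{q\to\infty}T(s_{pq})$. In the semigroup language these are the operators $\T(\epsilon)$ from the Lemma $P_YT(w_{Y,X})P_Y=P_X$. Using them, an intertwiner $A$ would have to intertwine a projection of rank comparable to $\dim R^\lambda$ with a strictly different projection structure, forcing $A=0$. I expect the bookkeeping of $H\cap gHg^{-1}$ and of these limits to be the hardest part.
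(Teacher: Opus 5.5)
Your admissibility argument is correct. It proves density of $H^\alpha_\infty(T)$ directly, whereas the paper deduces admissibility from irreducibility together with $H^\alpha_n(T)\neq 0$.

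The irreducibility part, however, rests on a condition that is false here. Take $g=(a\,p)$ with $a\in[n]$ and $p\notin[n]$; this represents the double coset with $k=1$. Then $H\cap gHg^{-1}=S([n]\setminus\{a\})\times S(\mathbb{N}\setminus([n]\cup\{p\}))$. Every element of this group fixes $a$ and $p$, so it commutes with $g$. Hence $\sigma^g(h)=\sigma(g^{-1}hg)=\sigma(h)$ on this group, and the identity is a nonzero intertwiner between $\sigma$ and $\sigma^g$. So no test element or weak-limit computation can give the disjointness you are aiming for. As you noticed yourself, your transposition $s_{pq}$ is not even in $H\cap gHg^{-1}$.

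The form of Mackey's criterion you invoke is only a sufficient condition. It is far from necessary when $[H:H\cap gHg^{-1}]=\infty$ for every $g\notin H$, as happens here. Compare $\ell^2(G/H)$ for a self-commensurating $H$: it is irreducible even though $\sigma$ and $\sigma^g$ are both trivial. In this situation an intertwiner on $H\cap gHg^{-1}$ only yields a formal kernel on the infinite double coset $HgH$. What rules it out is square-summability in the induced space, not disjointness.

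That missing ingredient is exactly what the paper uses. Let $f\in H(T)$ be $K^\alpha_n$-invariant. Then $\|f(\cdot)\|$ is constant on each orbit $gK^\alpha_n$. For $g\notin S(n)S_n(\infty)$, some point of $g^{-1}([n])$ lies outside $[n]$. So the family $\{k\,g^{-1}([n]) : k\in K^\alpha_n\}$ is infinite, and $gK^\alpha_n$ meets infinitely many right cosets of $S(n)S_n(\infty)$; square-summability then forces $f(g)=0$.

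Hence $H^\alpha_n(T)$ consists of the functions supported on $S(n)S_n(\infty)$ with $f(1)\in H(R^\lambda)\otimes\mathbb{C}\xi$. This space is an irreducible $S(n)$-module, since $S(n)$ commutes with $K^\alpha_n$, and it is cyclic by your own totality argument. Any operator commuting with $T$ commutes with the projection onto $H^\alpha_n(T)$. It therefore acts there as an $S(n)$-intertwiner, i.e. a scalar, and by cyclicity it is a scalar everywhere.

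The same input also repairs your Mackey framework. Let $A$ commute with $T$, and let $\iota(u\otimes\xi)$ be the function supported on $H$. Then $A\iota(u\otimes\xi)$ is again $K^\alpha_n$-invariant, hence supported on $H$ by the argument above. Since $u\otimes\xi$ is $H$-cyclic in $H_0$ and point evaluations are continuous, $A$ preserves the functions supported on $H$. On that subspace $A$ commutes with the irreducible $\sigma$, so it is a scalar there, and therefore a scalar on all of $H(T)$.
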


\begin{proof}
	Denote the subgroup \( S(n)\sym[n] \) by \( L \) and the representations \( S(\alpha^{(n)}, G) \) and \( T(\alpha, n, \lambda, G) \) by \( S \) and \( T \) respectively.

	The space \( H(T) \) consists of functions \( f\colon \sym \rightarrow H(R^{\lambda})\otimes H(S) \) satisfying \( f(xhg) = R^{\lambda}(x)S(h)f(g) \) for all \( x\in S(n) \),  \( h\in \sym[n] \) and  \( g\in \sym \), and such that \( f \) lies in \( \ell^2(L\backslash \sym) \).

	The last condition means the following: \( \|f(g)\| \) is constant along right cosets of \( L \), so we can consider the function \( \bar{f}\colon L \backslash\sym \rightarrow \mathbb{C} \) defined by \( \bar{f}(Lg) = \|f(g)\|\) and we want
	\[
	\sum_{\sigma}\| \bar{f}(\sigma)\|^2 <\infty,
	\]
	where \( \sigma \) runs along all right cosets \( L\backslash \sym \).

	The space of \( K_{n}^{\alpha} \)-invariant functions in \( H(T) \) is exactly the space of functions with support in \( L \). Indeed, let \( f \) be \( K_{n}^{\alpha} \)-invariant. Then \( \bar{f} \) is constant along \( K_{n}^{\alpha} \)-orbits, so if an orbit \( gK_{n}^{\alpha} \) intersects with infinitely many right cosets \( Lx \), then \( f(g) = 0 \).

	The right coset \( Lx \) consists of all \( y \) such that \( x^{-1}([n]) = y^{-1}([n]) \), while an orbit \( gK_{n}^{\alpha} \) consists of all \( y \) such that \( y(k) = g(k), k\in [n] \) and  \( y(\alpha_i) = g(\alpha_i) \) for any \( i\in I \).

	So we need to show that if \( g \) does not lie in \( L \), its orbit \( gK_{n}^{\alpha} \) intersects with infinitely many cosets \( Lx \). Note that \( gK_{n}^{\alpha} \) intersects with \( Lx \) if and only if \( x^{-1}([n]) \) lies the set \( K_n^{\alpha}g^{-1}([n]) = \{xg^{-1}([n]) \mid x\in K_n^{\alpha}\} \). But when \( g\notin L \) there exists a point \( a\in g^{-1}([n])\setminus[n] \), therefore the set \( K_n^{\alpha}g^{-1}([n])\) is infinite and \( gK_{n}^{\alpha} \) intersects with infinitely many \( Lx \).

	So, if \( f \) is \( K_{n}^{\alpha} \)-invariant, \( f(g) = 0 \) for any \( g\notin L \).

	All such functions \( f \) are determined by their value at identity \( f(1) \), and, moreover, \( f(1) \) must lie in \( (H(R^{\lambda}\otimes S))^{K_{n}^{\alpha}}= H(R^{\lambda})\otimes \xi \). So, the subspace of  \mbox{\( K_{n}^{\alpha} \)-invariant} functions is \( S(n) \)-irreducible and it is cyclic, so \( T \) itself is irreducible.

	Since \( T \) is irreducible with \( H^{\alpha}_{n}(T)\neq 0 \), it is \( \alpha \)-admissible.
\end{proof}

Note that if \( \beta \) is a partition such that \( [\beta]\succcurlyeq [\alpha] \), then \( T(\alpha, X, \lambda, G) \) is clearly \( \beta \)-admissible.

\begin{theorem}[cf. {\cite[Theorem~5.9]{Nessonov}}]\label{Theorem:Admissible1}
	Let \( T \) be some irreducible \( \beta \)-admissible representation of \( \sym \). Then there exists a finite number \( n \), a Young diagram \( \lambda \) of the size \( n \), a partition \( \alpha \in \mathcal{P}^{I} \) such that \( [\alpha]\preccurlyeq[\beta] \) and a Gram matrix \( G\in \mathcal{G}^{I} \) such that \( T \) is equivalent to \( T(\alpha, n, \lambda, G) \).
\end{theorem}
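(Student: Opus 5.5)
Since \( T \) is irreducible and \( \beta \)-admissible, some \( H^{\beta}_m(T) \) is nonzero. Among all such finite sets I take one of minimal size, reordered to be \( [n] \). By minimality, \( H^{\beta}_{X}(T)=0 \) for every \( X \) with \( |X|<n \). The plan is to study the representation \( \T_n \) of the semigroup \( \G^{\beta}(n) \) on \( W:=H^{\beta}_n(T) \), using the projection lemma \( P_{Y}T(w_{Y,X})P_Y=P_X \).

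The first key step is a vanishing statement: every \( \g\in\G^{\beta}(n) \) whose block \( A \) is a proper partial bijection (not a full permutation of \( [n] \)) acts by zero on \( W \). Such a \( \g \) factors through an idempotent \( \epsilon_{[n],X} \) with \( X\subsetneq[n] \). For instance, if \( k\notin\dom A \), then \( \g=\g\cdot\theta([n]\text{-idempotent killing }k) \), and the lemma identifies this idempotent with the projection onto \( H_{[n]\setminus\{k\}} \), which is zero. Consequently the only surviving elements are those with \( A\in S(n) \), and then the vectors \( B_i,C_j \) all vanish. These elements form the subsemigroup \( S(n)\times\G^{\beta^{(n)}} \), and \( \G^{\beta^{(n)}} \) is commutative.

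It follows that \( W \) carries a representation of \( S(n)\times\G^{\beta^{(n)}} \), where the commutative factor comes from \( \sym[n] \) acting with \( P_n \). This representation is irreducible, because \( \T_n \) is irreducible and the remaining elements act by \( 0 \). By Schur's lemma for the commutative involutive factor (the operators are commuting normal contractions), \( \G^{\beta^{(n)}} \) acts by a character \( \chi \), and \( W\cong H(R^\lambda) \) for some \( \lambda\vdash n \). Irreducibility on the \( S(n) \)-side needs only that any \( S(n) \)-invariant subspace is also invariant under the scalar operators.

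The second step is to reconstruct \( T \). Take \( w\in W \) and let \( H' \) be the closed \( \sym[n] \)-span of \( w \). Since \( w \) is \( K^{\beta}_n \)-invariant with \( \langle T(h)w,w\rangle=\chi(\theta(h))\|w\|^2 \) for \( h\in\sym[n] \), the space \( H' \) is spherical for \( (\sym[n],K^\beta_n) \) and is irreducible by Theorem~\ref{Theorem:Spherical}. By part~4 of that theorem, applied to \( \bar{[n]} \), it is \( S(\alpha^{(n)},G) \) with \( \alpha^{(n)}\preccurlyeq\beta^{(n)} \). Extending \( \alpha^{(n)} \) arbitrarily on \( [n] \) gives \( [\alpha]\preccurlyeq[\beta] \). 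The subspace \( \mathcal H \) spanned by \( T(S(n))H' \) is \( L=S(n)\sym[n] \)-invariant, and it is isomorphic to \( R^\lambda\otimes S(\alpha^{(n)},G) \) because \( S(n) \) and \( \sym[n] \) commute and the \( K_n \)-fixed part is \( W\cong R^\lambda \).

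The final step is to show that \( T|_L \) contains \( \mathcal H \) and that the translates \( T(g)\mathcal H \) for distinct cosets \( gL \) are mutually orthogonal. This orthogonality implies \( T\cong\Ind_L^{\sym}\mathcal H \) by the imprimitivity-type criterion, together with cyclicity of \( W \). The check reduces to showing \( \langle T(g)u,v\rangle=0 \) for \( u,v\in W \) and \( g\notin L \), since \( W \) generates \( \mathcal H \) under \( L \). Such a \( g \) yields an element \( \theta_n(g) \) with \( A\notin S(n) \), which acts by zero by the first step. This is exactly the orthogonality needed.

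I expect the main obstacle to be the vanishing claim of the first step. Its difficulty lies in writing every non-bijective \( \g \) as a product through \( \epsilon_{[n],X} \) using the explicit multiplication formula \eqref{Equation:SemiGrMult}.
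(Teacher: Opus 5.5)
\textbf{The vanishing step fails, because of how you choose $n$.} You minimize $|X|$ subject to $H^{\beta}_X(T)\neq 0$ for the \emph{fixed} partition $\beta$. You then kill an element $\g$ with $k\notin\dom A$ via $\g=\g\cdot\epsilon_{[n],[n]\setminus\{k\}}$.

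That identity is false in general. Suppose $k\in B_{i_1}(\g)$ while $k\in\beta_{i_0}$. By \eqref{Equation:SemiGrMult}, the product $\g\cdot\epsilon_{[n],[n]\setminus\{k\}}$ moves $k$ from $B_{i_1}$ to $B_{i_0}$ and increases $d_{i_1i_0}$ by $1$. So it equals $\g$ only when $i_1=i_0$.

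To absorb $\g$ you need an idempotent whose $B_i$'s record where $\g$ sends the points of $[n]\setminus\dom A$, e.g.\ $X=\dom A$ with the $B_i$'s of $\g$. By the lemma on idempotents of $\G^{\beta}(Y)$, this is the projection onto $H^{\tilde\beta}_X$ with $\tilde\beta_i^{(X)}=\beta_i^{(n)}\cup B_i$. That partition is equivalent to $\beta$ but different from it, and your minimality says nothing about such spaces.

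This cannot be repaired with your choice of $n$. Take $\beta$ to be the odd/even partition and $T=S(\beta',0)$ from the example following Theorem~\ref{Theorem:Admissible1}.
\begin{itemize}
\item $H^{\beta}_{\emptyset}(T)=0$, since moving $1$ back is not a $G$-permissible movement when $g_{12}=0$.
\item $\xi\in H^{\beta}_{\{1\}}(T)$, since $K^{\beta}_{\{1\}}=K^{\beta'}_{\{1\}}$. So your procedure gives $n=1$ and $\xi\in W$.
\item The element $\theta^{\{1\}}((1\,2))$ has $A=0$, yet $\langle T((1\,2))\xi,\xi\rangle=1$ because $1,2\in\beta'_2$. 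So it does not act by zero on $W$.
\item For the same reason your orthogonality step fails for $g=(1\,2)\notin L$.
\end{itemize}
The conclusion $T\cong T(\alpha,1,(1),G)$ would also be false. By the argument of Proposition~\ref{Proposition:Depth}, such an induced representation has no nonzero vector fixed by any full Young subgroup, whereas $\xi$ is $K^{\beta'}$-fixed.

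\textbf{The fix is the notion of depth.} Take $n$ minimal over all $\beta'\sim\beta$ as well as over $X$, and work in $\G^{\beta'}(n)$ on $H^{\beta'}_n(T)$. Then, for $X\subsetneq[n]$ with $|X|=k$ and arbitrary $B_i$, choose $g$ with $g(X)=[k]$. This gives $T(g)H^{\tilde\beta}_X=H^{g(\tilde\beta)}_k=0$, because $g(\tilde\beta)\sim\beta$. Hence every non-unit idempotent acts by $0$. Taking $X=\dom A$ and the $B_i$'s of $\g$, one checks $\g\cdot\epsilon=\g$ directly from \eqref{Equation:SemiGrMult}.

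With this correction the rest of your argument goes through and yields $[\alpha]\preccurlyeq[\beta']=[\beta]$. Your final step proves orthogonality of the translates $T(g)\mathcal H$ and applies the imprimitivity criterion. That is a more explicit substitute for the paper's appeal to the fact that $T$ is determined by $\T_n$.
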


\begin{remark}
	On the first glance, it seems possible to reformulate the statement of this theorem in the following way: if \( T \) is an irreducible \( \beta \)-admissible representation of \( \sym \), then there exists a finite set \( X \), an irreducible representation \( R \) of \( S(X) \) and a \( \beta^{(X)} \)-spherical representation \( S \) of \( \sym[X] \) such that
	\[
	T = \Ind^{\sym}_{S(X)\sym[X]} (R\otimes S).
	\]
	But this reformulation will not be correct, as is shown in the example below.
\end{remark}
\begin{example}
	Let \( \beta \) be the partition of \( \mathbb{N} \) onto the sets of even and odd numbers, i.e., \( I = \{1,2\} \),  \( \beta_1 = \{1,3,5,\ldots\} \), \( \beta_2 = \{2,4,6\ldots\} \). Now, let us change \( \beta \) a bit. Write
	\begin{gather*}
	\beta'_1 = \beta_1\setminus\{1\} =  \{3,5,7,\ldots\},\\
	\beta'_2 = \beta_2\cup \{1\} =  \{1,2,4,6,\ldots\}.
	\end{gather*}
	Any \( \beta' \)-spherical representation is uniquely determined by a real number \( a\in [0,1]\) (see Example~\ref{Example:Spherical}). The representation \( S(\beta', a) \) is \( \beta \)-spherical when \( a\neq 0 \) (more on equivalence between \( S(\alpha, G)  \) and \( S(\alpha', G') \) for different \( \alpha, \alpha' \) will be given in Section~\ref{Section:Equivalence}).

	Now take \( T = S(\beta', 0) \). It is \( \beta \)-spherical, but is not equivalent to \( \Ind^{\sym}_{S(X)\sym[X]} (R\otimes S) \) for any choice of \( X, R, S \), because the latter representation does not have any non-zero \( K^{\beta'} \)-invariant vectors.
\end{example}
\begin{definition}[{\cite[Definition~5.2]{Nessonov}}]
	The \emph{depth} of an irreducible \( \beta \)-admissible representation \( T \) is the smallest number \( n \) such that there  exists a partition \( \beta' \) equivalent to \( \beta \) such that \( H_n^{\beta'}(T) \) is not zero.
\end{definition}

\begin{proposition}\label{Proposition:Depth}
	The depth of \( T(\alpha, n, \lambda, G) \) is \( n \).
\end{proposition}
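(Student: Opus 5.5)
The plan is to bound the depth from both sides, using the realisation of \( T = T(\alpha, n, \lambda, G) \) as an induced representation, exactly as in the proof of Theorem~\ref{Theorem:IrrAndAdm}. Depth is measured with respect to the class \( [\alpha] \), since \( T \) is \( \alpha \)-admissible. Write \( L = S(n)\sym[n] \) as before, and view \( H(T) \) as the space of functions \( f \) with \( f(xhg) = R^{\lambda}(x)S(h)f(g) \) whose norm function \( \bar f \) on \( L\backslash\sym \) is square-summable.

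\emph{Upper bound.} The proof of Theorem~\ref{Theorem:IrrAndAdm} already shows that \( H_n^{\alpha}(T) \) consists of functions supported on \( L \) with \( f(1)\in H(R^{\lambda})\otimes\xi \). In particular \( H_n^{\alpha}(T)\neq 0 \). Taking \( \beta' = \alpha \) in the definition of depth gives depth at most \( n \).

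\emph{Lower bound.} Fix any \( \alpha'\sim\alpha \) and any \( m<n \); I will show \( H_m^{\alpha'}(T)=0 \). Let \( f \) be \( K_m^{\alpha'} \)-invariant. Then \( \bar f \) is constant along each orbit \( gK_m^{\alpha'} \). As in the earlier proof, the orbit \( gK_m^{\alpha'} \) meets the coset \( Lx \) if and only if \( x^{-1}([n]) \) belongs to the \( K_m^{\alpha'} \)-orbit of the \( n \)-element set \( g^{-1}([n]) \). So it suffices to show that this orbit is infinite for every \( g \). Then the orbit of \( g \) meets infinitely many right cosets, on each of which \( \bar f \) takes the same value \( \|f(g)\| \), so square-summability forces \( f(g)=0 \), and hence \( f=0 \).

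\emph{Infinite orbits.} Since \( |g^{-1}([n])| = n > m \), the set \( g^{-1}([n]) \) contains some point \( a\notin[m] \), say \( a\in\alpha'_i \). The part \( \alpha_i'^{(m)} = \alpha'_i\setminus[m] \) is still infinite, because all parts of a partition are countably infinite and we remove only finitely many points. The subgroup \( S(\alpha_i'^{(m)})\subset K_m^{\alpha'} \) can move \( a \) to any point of \( \alpha_i'^{(m)} \) while fixing the other \( n-1 \) points of \( g^{-1}([n]) \), since it is the full finitary symmetric group on an infinite set. Hence the orbit of \( g^{-1}([n]) \) is infinite.

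The only step needing care is this orbit argument. The key point is that \( K_m^{\alpha'} \) fixes no point outside \( [m] \) (every part of \( \alpha'^{(m)} \) is infinite), whereas every coset label \( x^{-1}([n]) \) has \( n>m \) elements and so must leave \( [m] \). Everything else repeats verbatim the computation from Theorem~\ref{Theorem:IrrAndAdm}.
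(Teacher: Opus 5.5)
Your proof is correct and follows the paper's argument: the paper also reuses the coset--orbit correspondence from Theorem~\ref{Theorem:IrrAndAdm}. It shows that for \(k<n\) every orbit \(gK_k^{\beta}\) meets infinitely many cosets \(Lx\), which forces any \(K_k^{\beta}\)-invariant \(f\) to vanish. You also spell out two things the paper leaves implicit: the upper bound \(H_n^{\alpha}(T)\neq 0\), and why the orbit of \(g^{-1}([n])\) is infinite. One small imprecision: \(a\) can be sent by a transposition to any of the infinitely many points of \(\alpha'_i\setminus([m]\cup g^{-1}([n]))\), not to literally any point of the part, but this does not affect the argument.
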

\begin{proof}
	The proof repeats the argument of Theorem~\ref{Theorem:IrrAndAdm}.

	If \( f\colon \sym \rightarrow H(R^{\lambda})\otimes H(S^{(\alpha)}, G) \) is \( K_k^{\beta} \)-invariant for some \( \beta\sim \alpha \), then \( f(g) = 0 \) for any \( g \) such that orbit \( gK_k^{\beta} \) intersects with infinitely many cosets \( Lx \). If \( k<n \) it happens for any \( g\in \sym \). Indeed, recall that the orbit \( gK_k^{\beta} \) intersects with the coset \( Lx \) if and only if \( x^{-1}([n]) \) lies in \( K_k^{\beta}g^{-1}([n]) \). But when \( k<n \) the set \( K_k^{\beta}g^{-1}([n]) \) is infinite for any \( g\in \sym \), so the only \( K^{\beta}_k \)-invariant vector \( f \) is \( 0 \).
\end{proof}
Recall that for any two finite sets \( X\subset Y \) we introduced an idempotent \( \epsilon^{\beta}_{Y,X} \) such that the orthogonal projection \( H^{\beta}_Y\rightarrow H^{\beta}_X \) is given by \( \T_{Y}(\epsilon^{\beta}_{Y,X}) \).
\begin{lemma}
	Let \( \epsilon \) be some idempotent in \( \G^{\beta}(Y) \), i.e., an element
	\[
	\epsilon = \begin{bmatrix}
		1_X & (B_j)^t\\
		(B_i) & (0)
	\end{bmatrix}
	\]
	for some \( X\subset Y \) and \( B_i\subset Y\setminus X \), \( i\in I \). Then \( \T^{\beta}_{Y}(\epsilon) \) gives the orthogonal projection from the space \( H^{\beta}_{Y} \) to the space \( H^{\alpha}_{X} \) for any partition \( \alpha \) satisfying:
	\[
	\alpha_i^{(X)} = \beta_i^{(Y)}\cup B_i.
	\]
\end{lemma}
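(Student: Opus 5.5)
The plan is to reduce this to the already-stated lemma \( P_{Y}T(w_{Y,X})P_{Y} = P_{X} \). We realise \( \epsilon \) by an explicit group element and then reinterpret that lemma with \( \alpha \) in place of \( \beta \).

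Since every element of \( \G^{\beta}(Y) \) is \( \theta^{\beta,Y}(w) \) for some \( w\in\sym \), we first choose \( w \) with the following properties. The element \( w \) fixes \( X \) pointwise. It sends each point of \( B_i \) to a point of \( \beta_i^{(Y)} \) and back, acting as a product of disjoint transpositions between \( B_i \) and fresh points of \( \beta_i^{(Y)} \). Outside these points, \( w \) is the identity on \( \bar Y \). Computing its data from Proposition~\ref{Proposition:DoubleCosets} gives \( A=1_X \), \( B_i(w)=C_i(w)=B_i \) and \( d_{ij}=0 \), so \( \theta^{\beta,Y}(w)=\epsilon \). Hence \( \T^{\beta}_Y(\epsilon)=P^{\beta}_Y T(w) P^{\beta}_Y \) on \( H^{\beta}_Y \).

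Next, \( \alpha \) is a partition of \( \mathbb{N} \) with \( \alpha_i^{(X)}=\beta_i^{(Y)}\sqcup B_i \). Two facts follow directly from the definitions. First, \( K^{\beta}_Y\subset K^{\alpha}_X \): a permutation fixing \( Y \) pointwise and preserving each \( \beta_i^{(Y)} \) also preserves each \( \alpha_i^{(X)} \) and fixes \( X \). Second, \( \alpha^{(Y)}=\beta^{(Y)} \), so \( K^{\alpha}_Y=K^{\beta}_Y \), \( H^{\alpha}_Y=H^{\beta}_Y \), and \( P^{\alpha}_Y=P^{\beta}_Y \). Moreover, \( w \) lies in \( K^{\alpha}_X \), because each transposition swaps two points of the same block \( \alpha_i^{(X)} \), and \( w \) moves \( Y\setminus X=\bigsqcup B_i \) into \( \bar Y \). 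So \( w \) satisfies the defining conditions of \( w^{\alpha}_{Y,X} \). This uses \( Y\setminus X=\bigsqcup_i B_i \), which is forced by the row/column condition on \( \epsilon \).

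Applying the earlier lemma to the partition \( \alpha \) then gives \( P^{\alpha}_Y T(w) P^{\alpha}_Y = P^{\alpha}_X \). Substituting \( P^{\alpha}_Y=P^{\beta}_Y \) yields \( \T^{\beta}_Y(\epsilon)=P^{\alpha}_X \) on \( H^{\beta}_Y \), which is the orthogonal projection onto \( H^{\alpha}_X\subset H^{\beta}_Y \).

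The main obstacle is that the earlier lemma was only stated, not proved. If it is not to be taken as given, I would prove it directly for \( \alpha \). Take \( v \) that is \( K^{\alpha}_X \)-invariant. Then \( v \) is \( K^{\alpha}_Y \)-invariant and \( T(w)v=v \), so \( P_YT(w)P_Y v=v \). For the converse, write the operator \( P_YT(w)P_Y \) as a limit of averages of \( T(k_1wk_2) \) over large finite pieces of \( K^{\alpha}_Y \). Using admissibility and a mixing argument in the style of Olshanskii, this converges to an average over \( K^{\alpha}_X \), which equals \( P^{\alpha}_X \). This is the delicate step. The rest is bookkeeping with the double-coset data.
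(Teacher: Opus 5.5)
Your proof is correct and follows the same route as the paper. The paper's proof is the single chain \( \T^{\beta}_Y(\epsilon)=P^{\beta}_Y T(w^{\alpha}_{Y,X})P^{\beta}_Y=P^{\alpha}_Y T(w^{\alpha}_{Y,X})P^{\alpha}_Y=P^{\alpha}_X \), which uses \( H^{\alpha}_Y=H^{\beta}_Y \) and the earlier lemma applied to \( \alpha \); you make explicit what the paper leaves implicit (the choice of \( w \), the check that \( \theta^{\beta,Y}(w)=\epsilon \), and the check that \( w \) is a valid choice of \( w^{\alpha}_{Y,X} \)), and, like the paper, you may simply take that earlier lemma as given.
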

\begin{proof}
	Since \( H^{\alpha}_{Y} = H^{\beta}_{Y} \), this projection is given by
	\[
	\T^{\alpha}_{Y}(\epsilon^{\alpha}_{Y,X}) = P^{\alpha}_Y T(w^{\alpha}_{Y,X})P^{\alpha}_Y = P^{\beta}_Y T(w^{\alpha}_{Y,X})P^{\beta}_Y = \T^{\beta}_Y(\epsilon).
	\qedhere
	\]
\end{proof}

\begin{proof}[Proof of the theorem]
	Suppose that \( n \) is the depth of \( T \) and \( H(T) \) possesses non-zero \( K_n^{\beta'} \)-invariant vectors for some \( \beta'\sim \beta \).

	If \( n = 0 \), then \( T \) is \( \beta' \)-spherical, and everything is proven. So now suppose \( n > 0 \).

	If \( X\subsetneq [n] \) and \( \epsilon\in \G^{\beta'}(n) \) is an idempotent of the form
	\[
	\epsilon = \begin{bmatrix}
		1_X & (B_j)^t\\
		(B_i) & (0)
	\end{bmatrix},
	\]
	\( \T_n(\epsilon) \) is a projection on the space \( H_{X}^{\tilde{\beta}} \), where \( \tilde{\beta }^{(X)} = \beta'^{(n)}\sqcup B_i\). Let \( |X| = k<n \) and \( g \) be any permutation from \( \sym \) that maps \( X \) to \( [k] \). Then \( T(g^{-1})H_{X}^{\tilde{\beta}} = H_k^{g(\tilde{\beta})} = 0 \) since \( g(\tilde{\beta})\sim \beta \).

	So \( \T_n(\epsilon) \) acts on \( H^{\beta'}_n \) by \( 0 \) for any idempotent \( 1\neq \epsilon\in \G^{\beta'}(n) \).  Let \( \mathfrak{I} \) be the two-sided ideal generated by all such \( \epsilon \). We claim that \( \mathfrak{I} \) consists of all \( \g \)
	\[
	\g = \begin{bmatrix}
		A& (C_j)\\
		(B_i) & (d_{ij})
	\end{bmatrix}
	\]
	such that \( \dom(A)\neq [n] \).

	Indeed, suppose that \( \g \) be as above. Then take \( X = \dom(A) \) and
	\[
	\epsilon = \begin{bmatrix}
		1_X & (B_j)^t\\
		(B_i) & (0)
	\end{bmatrix},
	\]
	where \( B_i \) are the same as in \( \g \). Then
	\begin{gather}
	\g\cdot\epsilon = \begin{bmatrix}
		A\cdot 1_X & (AB_j^t + C_j)\\
		(B_i\cdot 1_X + B_i) & (B_iB_j^t + d_{ij})
	\end{bmatrix} =
	\begin{bmatrix}
		A & (C_j)\\
		(B_i) & (d_{ij})
	\end{bmatrix} = \g
	\end{gather}
	lies in the ideal \(\mathfrak{I}\).

	So the representation \( \T^{\beta'}_{n} \) is uniquely determined by its restriction to the subset \( \G_0 = \G^{\beta'}(n)\setminus\mathfrak{I} \) that consists of all elements
	\[
	\g = \begin{bmatrix}
		A & (0)\\
		(0) & (d_{ij})
	\end{bmatrix},
	\] where \( A \) is a permutation \( A\in S(n) \). The set \( \G_0 \) is a sub-semigroup of \( \G^{\beta'}(n) \) and is isomorphic to the direct product of \( S(n) \) and \( \G^{\beta'^{(n)}} \). Its irreducible representations are exactly \( R^{\lambda}\otimes\chi \), where \( \chi \) is some character of \( \G^{\beta'^{(n)}} \) and uniquely corresponds to some \( \beta'^{(n)} \)-spherical representation of \( \sym[n] \).
\end{proof}

Note that the proof of this theorem gives one an algorithm on how to find all the parameters \( \alpha, n, \lambda, G \) given an irreducible \( \beta \)-admissible representation \( T \).

\section{Unitary equivalence for \(\beta\)-admissible representations}\label{Section:Equivalence}

In this section we find conditions for when two representations \(T_1 =  T(\alpha, n, \lambda, G) \) and \( T_2 = T(\beta, m, \mu, G') \) are unitary equivalent.

\begin{proposition}\label{Proposition:AdmisOrder}
	Representation \( T = T(\alpha, n, \lambda, G) \) is \( \beta \)-admissible if and only if \( [\beta]\succcurlyeq[\alpha] \).
\end{proposition}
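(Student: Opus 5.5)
The ``if'' direction is essentially the remark made after Theorem~\ref{Theorem:IrrAndAdm}. If \( [\beta]\succcurlyeq[\alpha] \), then \( K^{\beta}_m\subset K^{\alpha}_m \) for all \( m \) large enough. The proof of Theorem~\ref{Theorem:IrrAndAdm} produces nonzero \( K^{\alpha}_n \)-invariant vectors in \( H(T) \). These are also \( K^{\alpha}_m \)-invariant for \( m\geqslant n \), hence \( K^{\beta}_m \)-invariant for \( m \) large. So \( H^{\beta}_m(T)\neq 0 \), and since \( T \) is irreducible it is \( \beta \)-admissible.

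For the ``only if'' direction I would first reduce to the spherical factor \( S=S(\alpha^{(n)},G) \). Realise \( H(T) \) as in the proof of Theorem~\ref{Theorem:IrrAndAdm}, as functions \( f\colon\sym\to H(R^\lambda)\otimes H(S) \), with \( L=S(n)\sym[n] \).
\begin{enumerate}[1.]
\item Take a nonzero \( K^{\beta}_m \)-invariant \( f \) and a point \( g \) with \( f(g)\neq 0 \).
\item Choose \( M\geqslant m \) with \( g\in S(M) \) and \( [n]\subset[M] \). Every \( k\in K^{\beta}_M \) commutes with \( g \) and lies in \( \sym[n]\subset L \).
\item Right invariance then gives \( f(g)=f(gk)=f(kg)=(R^\lambda\otimes S)(k)f(g) \).
\end{enumerate}
Thus \( f(g) \) is a nonzero \( K^{\beta}_M \)-invariant vector of \( R^\lambda\otimes S \), on which \( K^\beta_M \) acts only through \( S \). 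Hence \( S \) has a nonzero \( K^{\beta}_M \)-invariant vector, and it suffices to prove: if \( S(\alpha,G) \) has a nonzero \( K^{\beta}_M \)-invariant vector for some \( M \), then \( [\beta]\succcurlyeq[\alpha] \). This statement is up to finitely many points, so the shift from \( \mathbb N \) to \( \bar{[n]} \) is harmless.

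For that claim I would argue by contradiction in the tensor model \( V^{\otimes\infty}(\xi) \) with \( \xi_k=v_i \) for \( k\in\alpha_i \). Suppose \( [\beta]\not\succcurlyeq[\alpha] \). Then for every finite \( Y \) some block of \( \beta^{(Y)} \) meets two different blocks of \( \alpha^{(Y)} \). So we can choose infinitely many pairwise disjoint pairs \( \{p_s,q_s\} \), each lying in a single block of \( \beta \) and with \( p_s\in\alpha_{i_s} \), \( q_s\in\alpha_{j_s} \), \( i_s\neq j_s \). The transpositions \( (p_s\,q_s) \) with \( p_s,q_s>M \) lie in \( K^{\beta}_M \).

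Now take an invariant unit vector \( \eta \) and approximate it by \( \eta'\in V^{\otimes N}\otimes\xi_{N+1}\otimes\cdots \). For a product \( \tau \) of \( r \) such transpositions supported beyond \( N \), a direct computation gives
\[
\langle T(\tau)\eta',\eta'\rangle=\|\eta'\|^2\prod_{s}|\langle v_{i_s},v_{j_s}\rangle|^2 .
\]
This product should be forced far from \( 1 \), contradicting \( T(\tau)\eta=\eta \) once \( \|\eta-\eta'\| \) is small.

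The main obstacle is making this product uniformly small. Each factor is \( <1 \) by non-collinearity, but when \( I \) is infinite the numbers \( |\langle v_{i_s},v_{j_s}\rangle| \) could tend to \( 1 \), so the infinite product might converge to a positive limit. I would first handle the easy case: some block of \( \beta \) meets two fixed blocks \( \alpha_i,\alpha_j \) in infinitely many points. Then all pairs can be taken with the same \( (i,j) \), and the product tends to \( 0 \). In the remaining case I would choose the pairs greedily so that the factors stay bounded away from \( 1 \). If that fails, I would fall back on a refined version of the argument showing that \( K^{\alpha}_n \)-invariants equal \( V^{\otimes n} \): describe the \( K^\beta_M \)-invariant vectors directly via the equivalence criterion for infinite tensor products, namely \( \sum(1-|\langle\cdot,\cdot\rangle|)<\infty \). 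This should show that such vectors exist only when almost every point lies in a \( \beta \)-block contained in a single \( \alpha \)-block.
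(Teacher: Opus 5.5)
\textbf{Verdict: there is a genuine gap, and it cannot be closed.} For infinite \(I\) the ``only if'' direction is false, so the missing step fails, not just your proposed fixes for it.

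Several parts of your proposal are correct and are essentially the paper's argument. These are the ``if'' direction; the reduction of ``only if'' to the spherical factor (evaluate an invariant \(f\) at a \(g\) with \(f(g)\neq 0\), and use that \(K^{\beta}_M\) commutes with \(g\) and lies in \(S_n(\infty)\subset L\)); and the tensor-model estimate with disjoint transpositions. The gap is exactly the one you flag: everything hinges on \(\prod_s|\langle v_{i_s},v_{j_s}\rangle|^2\to 0\). The paper's proof has the same gap: it simply asserts \(\varphi(k_r)\to 0\) at this point. When \(I\) is finite, or more generally when \(\sup_{i\neq j}|g_{ij}|<1\), this is automatic and your proof is complete. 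When \(I\) is infinite, the greedy choice and the von Neumann fallback both fail, because the statement itself fails.

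\textbf{Counterexample.} Take \(\alpha,\beta\) as in Example~\ref{Example:Infimum}: \(\alpha_i=\{a_i^1<a_i^2<\cdots\}\), \(\beta_1=\alpha_1\setminus\{a_1^1\}\), and \(\beta_i=(\alpha_i\setminus\{a_i^1\})\cup\{a_{i-1}^1\}\) for \(i\geq 2\). For every finite \(Y\) and every \(i\) with \(a_i^1\notin Y\), the block \(\beta^{(Y)}_{i+1}\) meets both \(\alpha_i\) (at \(a_i^1\)) and \(\alpha_{i+1}\). Hence \([\beta]\not\succcurlyeq[\alpha]\).

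Let \(V=\mathbb{C}^2\) and \(v_i=(\cos\frac1i,\sin\frac1i)\). These are pairwise non-collinear and span \(V\). Put \(\eta_k=v_i\) for \(k\in\beta_i\). Then \(\eta_k=\xi_k\) except at \(k=a_i^1\), where \(\langle\eta_k,\xi_k\rangle=\cos\frac{1}{i(i+1)}>0\). Since \(\sum_i\bigl(1-\cos\frac{1}{i(i+1)}\bigr)<\infty\), the vectors \(\eta_1\otimes\cdots\otimes\eta_N\otimes\xi_{N+1}\otimes\cdots\) form a Cauchy sequence. So \(\eta=\bigotimes_k\eta_k\) is a unit vector of \(V^{\otimes\infty}(\xi)\).

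It is \(K^{\beta}\)-invariant, because \(k\mapsto\eta_k\) is constant on the blocks of \(\beta\). Also \(\langle\eta,\xi\rangle=\prod_i\cos\frac{1}{i(i+1)}>0\). The orthogonal projection onto \(H(S(\alpha,G))\) commutes with \(S(\infty)\) and does not kill \(\eta\), since \(\langle\eta,\xi\rangle\neq 0\). It therefore produces a nonzero \(K^{\beta}\)-invariant vector. So \(T(\alpha,0,\varnothing,G)=S(\alpha,G)\) is \(\beta\)-admissible; by Schur's lemma it is in fact equivalent to \(S(\beta,G)\).

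\textbf{Why your remedies fail here.} Every admissible pair here (same \(\beta\)-block, different \(\alpha\)-blocks) has the form \(\{a_{i-1}^1,y\}\) with \(y\in\alpha_i\). Disjointness allows at most one such pair for each \(i\). Hence \(\varphi(k_r)\geq\prod_{i\geq2}\cos^2\frac{1}{(i-1)i}>0\) for every \(r\), so no greedy choice makes the product small. Moreover, no block \(\beta_i\) with \(i\geq 2\) lies in a single \(\alpha\)-block, so the conclusion you expected from the von Neumann criterion is also false.

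That criterion \(\sum(1-|\langle\cdot,\cdot\rangle|)<\infty\) is the right tool. What it actually shows is that the correct condition for \(\beta\)-admissibility of \(T(\alpha,n,\lambda,G)\) must involve \(G\), not just the classes \([\alpha]\) and \([\beta]\).
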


\begin{proof}
	Suppose that \( \alpha = (\alpha_i)_{i\in I} \), \( \beta = (\beta_j)_{j\in J} \) and \( [\beta]\not\succcurlyeq [\alpha]\). It means that for any \( N \) there exist \( j\in J \) such that \( \beta^{(N)}_{j} \) intersects with at least two different \( \alpha^{(N)}_{k}, \alpha^{(N)}_{l} \).

	Firstly, assume \( n = 0 \) and \( T = S(\alpha, G) \) is a spherical representation with spherical vector \( \xi = \xi_1\otimes\cdots \).

	Let \( \eta\in H(T) \) be unit \( K_n^{\beta} \)-invariant vector. Since \( \xi \) is cyclic, for any \( \varepsilon>0 \) there exists
	\[
	\eta_{\varepsilon} = \sum_{i=1}^m c_iT(g_i)\xi, \quad \|\eta-\eta_{\varepsilon}\|<\varepsilon/2
	\]
	and one can easily check that
	\[
	\|T(k)\eta_{\varepsilon} - \eta_{\varepsilon}\|<\varepsilon
	\]
	holds.
	For any \( k\in K_n^{\beta} \)
	\begin{gather*}
		\|{T(k)\eta_{\varepsilon}}\|^2= \|{\eta_{\varepsilon}}\|^2 = \sum_{i,j = 1}^m c_i\bar{c_j}(T(g_i)\xi, T(g_j)\xi) = \sum c_i\bar{c_j} \varphi(g_j^{-1}g_i)>1-\varepsilon/2
	\end{gather*}
	since \( \eta \) is unit. In the similar manner,
	\begin{gather*}
		\left( T(k)\eta_{\varepsilon}, \eta_{\varepsilon} \right) = \sum c_i\bar{c_j} \varphi(g_j^{-1}kg_i).
	\end{gather*}
	It is true for all \( k\in K_n^{\beta} \). We may take \( N>n \) such that all \( g_i \) lie in \( S(N) \). Then for any \( k\in K_N^{\beta} \)
	\[
	\varepsilon > \|T(k)\eta_{\varepsilon} - \eta_{\varepsilon}\| = \sum_{i,j = 1}^m (c_i\bar{c_j}\varphi(g_j^{-1}g_i)) (2-\varphi(k) - \bar{\varphi(k)})> (1-\varepsilon/2)(2-\varphi(k) - \bar{\varphi(k)}).
	\]
	Now, since \( [\beta]\not\succcurlyeq [\alpha] \) we have an infinite collection of pairwise different numbers \( x_1, y_1, x_2, y_2, \ldots >N\) such that for any \( s \) elements \( x_s, y_s \) lie in the same part \( \beta_j \), but in different parts \( \alpha_k, \alpha_l \). If we now take \( k_r = (x_1, y_2)\cdots(x_r,y_r) \in K_N^{\beta}\), the sequence \( \varphi(k_r) \) converges to \( 0 \), so for \( r \) large enough
	\[
	(2-\varphi(k_r) - \bar{\varphi(k_r)}) > \frac{\varepsilon }{2-\varepsilon}
	\]
	and we have a contradiction.

	Now suppose \( T = T(\alpha, n, \lambda, G) \) and \( f\neq 0 \) is \( K_N^{\beta} \)-invariant. We may assume that \( N>n \). Using the same argument we used in the proof of Theorem~\ref{Theorem:Admissible1} and Proposition~\ref{Proposition:Depth} we obtain that the support of \( f \) consists only of those \( g \) such that \( g^{-1}([n]) \subset [N] \). Moreover, \( f \) is determined by its values on the elements \( g_X \), where \( X\subset [N] \) is any subset of size \( n \) and \( g_X\in S(N) \) are some permutations mapping \( X \) onto \( [n] \). Since any \( k\in K^{\beta}_N \) commutes with all \( g_X \) and lies in \( \sym[n] \), all af the values \( g_X \) must lie in \( \in H(R^{\lambda})\otimes H( S(\alpha^{(n)}, G ) )^{K_N^{\beta}} \), so for \( f \) to be non-zero, \( S(\alpha^{(n)}, G) \) must be \( \beta^{(n)} \)-admissible, i.e., \( [\beta] \preccurlyeq [\alpha] \).
\end{proof}
It follows that if two representations \( T_1 = T(\alpha, n, \lambda, G) \) and \( T_2 = T(\beta, m, \mu, G') \) are equivalent, then \( \alpha \sim \beta \) and \( m = n \), since the depth of \( \alpha \)-admissible representation does not depend on the choice of representative \( \alpha \) in the equivalence class.

\begin{proposition}~\label{Proposition:G-closeness}
	Let \( \alpha\in \mathcal{P}^{I} \) be some partition of \( \mathbb{N} \), \( G = (g_{ij}) \in \mathcal{G}^{I} \) be some Gram matrix, and \( S = S(\alpha, G) \) be the corresponding spherical representation. Recall that it is a closed subspace of the Hilbert space \( V^{\otimes\infty} (\xi)\).
	Now suppose that \( \beta \sim \alpha \) is obtained from \( \alpha \) by moving some number \( k \) from \( \alpha_i \) to \( \alpha_j \). In other words, there are \( i\neq j \in I \) and \( k\in \alpha_i \) such that \( \beta_i = \alpha_i\setminus\{k\} \), \( \beta_j = \alpha_j\cup \{k \} \) and \( \beta_{i'} = \alpha_{i'} \) for \( i'\neq i,j \).

	Then \( S \) has a non-zero \( K^{\beta} \)-invariant vector \( \eta \) if and only if \( g_{ij}\neq 0 \). In this case we say that \( \beta \) is obtained from \( \alpha \) by a \emph{\( G \)-permissible movement}.
\end{proposition}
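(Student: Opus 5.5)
Inside the ambient space \(V^{\otimes\infty}(\xi)\) the \(K^\beta\)-invariant vectors are easy to write down, so the plan is to find them there and then decide when such a vector lies in the cyclic subspace \(H(S)\).

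\textbf{The \(K^\beta\)-invariant vectors of the ambient space.} \(K^\beta\) differs from \(K^\alpha\) only in how it treats the point \(k\). As in part 1 of the proof of Theorem~\ref{Theorem:Spherical}, the \(K^{\alpha}_{\{k\}}\)-invariant vectors of \(V^{\otimes\infty}(\xi)\) are exactly the vectors \(u\otimes\xi'\). Here \(u\in V\) sits in the \(k\)-th factor and \(\xi'=\bigotimes_{l\neq k}\xi_l\). Since \(K^\alpha_{\{k\}}=K^\beta_{\{k\}}\), every \(K^\beta\)-invariant vector has this form.

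Invariance under \(K^\beta\) additionally requires invariance under the transpositions \((k,l)\) with \(l\in\alpha_j\). Such a transposition sends \(u\otimes\xi'\) to \(v_j\otimes(\xi'\text{ with } u \text{ in slot } l)\). Comparing these forces \(u\in\mathbb{C}v_j\). So the only candidate, up to scalar, is
\[
\eta=v_j\otimes\xi', \qquad\text{i.e. the product vector with } v_j \text{ in slot } k.
\]
This vector is indeed \(K^\beta\)-invariant in \(V^{\otimes\infty}(\xi)\). The statement therefore reduces to deciding when \(\eta\), or equivalently the projection \(P\eta\) onto \(H=H(S)\), is non-zero.

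\textbf{The case \(g_{ij}=0\).} Since \(S\) is irreducible and \(\xi\) is cyclic, \(P\eta\neq0\) if and only if \(\langle \eta, T(g)\xi\rangle\neq0\) for some \(g\in\sym\). For \(g\in S(M)\) this inner product equals
\[
\langle v_j,\xi_{g^{-1}(k)}\rangle\prod_{l\neq k}\langle \xi_l,\xi_{g^{-1}(l)}\rangle .
\]
If \(g_{ij}=0\), I would argue that every such term vanishes. The multiset of labels \(\{\xi_{g^{-1}(l)}\}\) is a permutation of the original labels. Since \(\eta\) has one more \(v_j\) and one fewer \(v_i\) than \(\xi\) on a finite window, some slot must pair a \(v_j\) from \(\eta\) with a \(v_i\) from \(T(g)\xi\), up to chains \(v_j\to v_{i_2}\to\cdots\to v_i\). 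This is the step I expect to be the main obstacle. The counting only shows that a "path" of mismatches from \(j\) to \(i\) is needed, not that a direct \(v_j\)–\(v_i\) pairing occurs.

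To handle this I would work with the orthogonal decomposition rather than matrix coefficients. One first shows that \(P\eta\) is \(K^\beta\)-invariant in \(H\) and hence proportional to the limit of \(\frac1N\sum_{l}T((k,l))\xi\) over \(l\in\alpha_j\) (an averaging/mean-ergodic argument over \(K^\beta\)). One then computes \(\langle T((k,l))\xi,\eta\rangle\). This equals \(\langle v_i,v_j\rangle\langle v_j,v_j\rangle=\overline{g_{ij}}\), up to the paper's conjugation convention, and the other factors are \(1\).

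More directly, I would show \(P\eta = c\,\lim_l T((k,l))\xi\), weakly, with \(c\) determined by \(\langle\eta, T((k,l))\xi\rangle=\overline{g_{ij}}\) and \(\|\lim\|\) controlled by the spherical function. For \(l\neq l'\) both in \(\alpha_j\),
\[
\langle T((k,l))\xi,T((k,l'))\xi\rangle=\varphi\big((k,l)(k,l')\big)=|g_{ij}|^2 .
\]
Hence the weak limit \(\zeta\) of \(T((k,l))\xi\) exists, by stationarity along \(\alpha_j\), and satisfies \(\|\zeta\|^2=|g_{ij}|^2\). Moreover \(\zeta\) is \(K^\beta\)-invariant, since conjugating by \(K^\beta\) permutes the indices \(l\). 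So \(\zeta\neq0\) exactly when \(g_{ij}\neq0\).

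\textbf{Conclusion.} For \(g_{ij}\neq0\) this gives the required non-zero invariant vector \(\eta:=\zeta\). Conversely, suppose \(H^{K^\beta}\neq0\). By the uniqueness computation in the first step, any such vector is proportional to \(P\eta\). Averaging over \(K^\beta\) maps \(\xi\) to \(\zeta\), so \(\langle P\eta,\xi\rangle\) relates to \(\zeta\), and cyclicity of \(\xi\) together with irreducibility force \(\zeta\neq0\), i.e. \(g_{ij}\neq0\). Verifying this last implication carefully, namely that a \(K^\beta\)-invariant vector in \(H\) must have non-zero inner product with the \(K^\beta\)-average of \(\xi\), is where I would spend the most care.
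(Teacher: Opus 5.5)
Your ``if'' direction is fine. In the ambient space \(T((k,l))\xi\) converges weakly, as \(l\to\infty\) in \(\alpha_j\), to \(\langle v_i,v_j\rangle\,\eta\) (test against finitely supported product vectors). Since \(H(S)\) is weakly closed, \(\eta\in H(S)\) when \(g_{ij}\neq 0\). This is the paper's argument, with a weak limit in place of the Ces\`aro means \(\eta_n\).

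The gap is in the converse, exactly where you suspected. Two of your claims are false: that \(P\eta\) is proportional to \(\zeta\), and that a non-zero \(K^\beta\)-invariant vector of \(H(S)\) must pair non-trivially with the \(K^\beta\)-average of \(\xi\). Averaging \(\xi\) over \(K^\beta\) gives the orthogonal projection of \(\xi\) onto the line \(\mathbb{C}P\eta\). That projection is a multiple of \(P\eta\) with coefficient proportional to \(\langle \xi,P\eta\rangle=\langle\xi,\eta\rangle=\langle v_i,v_j\rangle\). So \(\zeta=0\) whenever \(g_{ij}=0\), whether or not \(P\eta=0\). Cyclicity of \(\xi\) gives \(P\eta=0\) only if \(\langle\eta,T(x)\xi\rangle=0\) for every \(x\), and averaging tells you nothing about \(x\neq 1\).

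Moreover, the chain obstruction you found is genuine, so no argument can close this gap: for \(|I|\geqslant 3\) the ``only if'' direction is false. Take \(V=\mathbb{C}^2\), \(v_1=e_1\), \(v_2=e_2\), \(v_3=(e_1+e_2)/\sqrt2\), and move \(k\in\alpha_1\) to \(\alpha_2\), so \(g_{12}=0\). For \(p\in\alpha_3\) the vector \(T((k\,p))\xi\) carries \(v_3\) in slot \(k\) and \(v_1\) in slot \(p\), hence
\[
\langle T((k\,p))\xi,\eta\rangle=\langle v_3,v_2\rangle\langle v_1,v_3\rangle=\tfrac12\neq 0 .
\]
So \(P\eta\) is a non-zero \(K^\beta\)-invariant vector of \(H(S)\), even though \(\langle P\eta,\xi\rangle=\langle\eta,\xi\rangle=0\). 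Equivalently, the permissible moves \(1\to 3\) and then \(3\to 2\) bring \(\eta\) into \(H(S)\). The paper's own proof of this direction makes the same mistake: it asserts \(\eta\perp T(x)\xi\) for all \(x\) when \(g_{ij}=0\), which this \(x\) contradicts.

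The correct criterion is that \(i\) and \(j\) lie in the same connected component of \(\Gamma(G)\). This is also what the paper's later description of \(G\)-closeness through \(l(\alpha,G)^{(N)}\) actually uses. For the converse, use the orthogonal decomposition \(V=\bigoplus_c V_c\). On a window \([N]\) containing \(k\) and the support of \(x\), a permutation preserves the number of slots whose label lies in each component. The vector \(\eta\), by contrast, moves one slot from the component of \(i\) to that of \(j\). If these components differ, some factor of \(\langle\eta,T(x)\xi\rangle\) is \(\langle v_a,v_b\rangle\) with \(a,b\) in different components, hence zero. For \(|I|=2\) there are no chains, and the statement as written holds.
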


\begin{proof}
	Note that for any \( \beta\sim \alpha \) the space \( V^{\otimes}(\xi) \) has exactly one (up to multiplication by scalar) \( K^{\beta} \)-invariant vector \( \eta \), namely	\(	\eta = \eta_1\otimes\ldots \),	where \( \eta_k = \ v_i \) when \( k\in \beta_i \).

	Now, if \( g_{ij} = 0 \), then for any \( x\in \sym \) vector \( \eta \) is orthogonal to \( T(x)\xi \), so lies in the orthogonal complement to \( H(S) \) in \( V^{\otimes \infty } (\xi) \). It follows that \( H(S) \) does not have any non-zero \( K^{\beta} \)-invariant vectors. In particular, \( S(\alpha, G) \) is not equivalent to \( S(\beta, H) \) for any choice of \( H \).

	Now suppose \( g_{ji}\neq 0 \). Let \( \alpha_j = (a_j^1<a_j^2<\ldots) \) and take
	\[
	\eta_n = \frac{1}{ng_{ji}} \sum_{s = 1}^{n} T(k, a_j^s)\xi.
	\]
	Then
	\begin{gather*}
		(\eta_n, \eta_n) = \frac{1}{n^2|g_{ji}|^2} \sum_{s\neq t = 1}^n \langle T(k, a_j^s, a_j^t)\xi, \xi \rangle + \frac{n}{n^2|g_{ji}|^2} = \frac{n(n-1)|g_{ji}|^2}{n^2|g_{ji}|^2} + \frac{n}{n^2|g_{ji}|^2};\\
		(\eta_n, \eta) = \frac{1}{ng_{ji}} \sum_{s = 1}^n (T(k,a_j^s)\xi, \eta) = \frac{1}{n g_{ji}} n g_{ji} = 1;\\
		\| \eta - \eta_n \|^2 = 1 + \frac{n(n-1)}{n^2} + \frac{1}{n |g_{ji}|^2} - 2 \rightarrow 0,
	\end{gather*}
	So \( \eta \) lies in \( H(S) \) and \( S \) is equivalent to \( S(\beta, G) \) for the same Gram matrix \( G \).
\end{proof}

\begin{example}
	Suppose that \( |I| = 2 \), \( \alpha\) is a partition onto even and odd numbers and \( \beta \sim \alpha \). Recall that \( \alpha \)-spherical representations are parametrized by \( g\in [0,1] \). From the proposition it follows that \( S(\alpha, g) \) is equivalent to \( S(\beta, g) \) whenever \( g\neq 0 \). Representations \( S(\alpha, 0) \) and \( S(\beta, 0) \) are equivalent if and only if \( \beta \) is obtained from \( \alpha \) by some finite permutation of \( \mathbb{N} \).
\end{example}

\begin{definition}
	Let \( \alpha \) be some partition of \( \mathbb{N} \) and suppose that \( \beta\sim \alpha \) can be obtained from \( \alpha \) by some finite permutation and by composition of finite number of \( G \)-permissible movements. We will say that \( \alpha \) and \( \beta \) are \emph{\( G \)-close}.
\end{definition}

\begin{definition}
	For a partition \( \alpha \in \mathcal{P}^{I} \) and a Gram matrix \( G \in \mathcal{G}^{I} \) we define a graph \( \Gamma(G) \) as in the proof of the third part of Theorem~\ref{Theorem:Spherical}. I.e, its vertices are indexed by \( I \), and two vertices \( i \neq j \) are connected by an edge if \( g_{ij}\neq 0 \). Then connected components \( c \) of \( \Gamma(G) \) define a splitting of \( I \) into subsets \( I = \bigsqcup_{c\in c} I_{c} \).

	For any \( N\geqslant 0 \), \( \alpha\in \mathcal{P}^{I} \) and \( G\in \mathcal{G}^{I} \), we define a collection of numbers \( l(\alpha, G)^{(N)} = \left(l(\alpha, G)^{(N)}_{c}\right)_{c\in C}\), where \( l(\alpha, G)^{(N)}_{c} \) denotes the number of points \( k\in [N] \) such that \( k\in \alpha_i \) and \( i\in I_{c} \).
\end{definition}

 Then two partitions \( \alpha, \beta \) are \( G \)-close if and only if for all \( N \) large enough we have \( l(\alpha, G)^{(N)} = l(\beta, G)^{(N)} \).

\begin{proposition}[cf. {\cite[Theorem~3.7]{Nessonov}}]
	Let \( \alpha\in \mathcal{P}^{I} \), \( G\in \mathcal{G}^{I} \) and \( S(\alpha, G) \) be the corresponding representation of \( \sym \) lying in the space \( V^{\otimes \infty} (\xi) \).

	Let \( C \) be the set of connected components of the graph \( \Gamma(G) \) and for any \( c\in C \) we set \( V_c \) to be the closed subspace of \( V \) generated by all \( v_i \) for \( i\in c \). Then \( V \) splits into orthogonal partition
	\[
	V = \bigoplus_{c\in C} V_c
	\]
	and, in turn, \( G \) splits into direct sum of Gram matrices \( G_c\in \mathcal{G}^{c} \).

	Now let \( \beta \) be the partition obtained from \( \alpha \) by uniting all parts lying in the same component. In other words, \( \beta\in \mathcal{P}^{C} \) and
	\[
	\beta_c = \bigsqcup_{i\in c} \alpha_i.
	\]
	Then the representation \( S(\alpha, G) \) is equivalent to the induced representation
	\[
	\Ind^{\sym}_{K^{\beta}} \bigotimes_{c\in C} S(\rest{\alpha}{\beta_c}, G_c),
	\]
	where \( \rest{\alpha}{\beta_c}\in \mathcal{P}^{c}(\beta_c) \) is the partition \( \beta_c = \bigsqcup_{i\in c} \alpha_i \) of the set \( \beta_c \), and the representation \( \bigotimes S(\rest{\alpha}{\beta_c}, G_c) \) is defined as follows. Its space is
	\[
	\bigotimes_{c\in C} (H(S(\rest{\alpha}{\beta_c}, G_c)), \xi^c),
	\]
	where \( \xi^c \) is the spherical vector of the corresponding representation. The action of \( K^{\beta} = \sum_{c\in C} S(\beta_c) \) is defined naturally.
\end{proposition}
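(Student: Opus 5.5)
The plan is to realize \(S(\alpha,G)\) concretely inside \(V^{\otimes\infty}(\xi)\) using the orthogonal splitting \(V=\bigoplus_{c\in C}V_c\), and then to identify it with the induced representation through Mackey's criterion. The criterion we will use says that a representation is induced from a subgroup \(K\) if it contains a \(K\)-invariant subspace \(W\) whose translates \(T(g)W\), for \(g\) running over coset representatives of \(\sym/K\), are pairwise orthogonal and span a dense subspace.

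\textbf{Step 1: orthogonality of the splitting.} If \(i,j\) lie in different components, then \(g_{ij}=\langle v_j,v_i\rangle=0\) by definition of \(\Gamma(G)\). Hence the spaces \(V_c\) are pairwise orthogonal, \(V=\bigoplus_c V_c\), and \(G=\bigoplus_c G_c\).

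\textbf{Step 2: description of \(T(g)\xi\).} For \(g\in\sym\), the vector \(T(g)\xi\) is the elementary tensor whose \(k\)-th factor is \(\xi_{g^{-1}(k)}\). Call the \emph{type} of such a tensor the partition \(k\mapsto c(k)\), where \(c(k)\) is the component containing the factor at position \(k\). This type is exactly the partition \(g(\beta)\), which is equivalent to \(\beta\).

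Two elementary tensors of different types are orthogonal, since they differ at some position \(k\) and there the factors lie in orthogonal spaces \(V_c\ne V_{c'}\). Also \(g(\beta)=\beta\) if and only if \(g\in K^{\beta}\), because the permutation preserves each block of \(\beta\). Therefore the closed span \(W_g\) of \(\{T(gk)\xi : k\in K^\beta\}\) depends only on the coset \(gK^\beta\). For distinct cosets these subspaces are mutually orthogonal, and together they span \(H(S)\) densely. In particular \(H(S)=\bigoplus_{gK^\beta}T(g)W_1\).

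\textbf{Step 3: identifying \(W_1\).} It remains to show that \(W_1\), as a \(K^\beta\)-module, is equivalent to \(\bigotimes_c\bigl(H(S(\rest{\alpha}{\beta_c},G_c)),\xi^c\bigr)\). Reorder the factors of \(V^{\otimes\infty}(\xi)\) by blocks \(\beta_c\). For tensors with all positions of \(\beta_c\) carrying factors in \(V_c\), this gives a unitary isomorphism of the type-\(\beta\) part onto \(\bigotimes_c\bigl(V_c^{\otimes\beta_c}(\xi|_{\beta_c}),\xi|_{\beta_c}\bigr)\).

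This isomorphism intertwines the action of \(K^\beta=\sum_c S(\beta_c)\) with the product action. An element \(k=(k_c)\) has only finitely many nontrivial components, so \(T(k)\xi\) corresponds to \(\bigotimes_c S_c(k_c)\xi^c\), where \(S_c=S(\rest{\alpha}{\beta_c},G_c)\).

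The cyclic span of such vectors is the closed span of finite elementary tensors \(\bigotimes_c x_c\) with \(x_c\in\{S_c(h)\xi^c\}\) and almost all \(x_c=\xi^c\). This is precisely the restricted tensor product of the cyclic subspaces \(H(S_c)\). Hence \(W_1\) is the stated \(K^\beta\)-module, and Mackey's criterion gives \(S(\alpha,G)\cong\Ind^{\sym}_{K^\beta}W_1\).

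\textbf{Main obstacle.} The delicate point is the last identification. We must check carefully that the closed cyclic span of \(\xi\) under the direct sum \(K^\beta\) coincides with the infinite restricted tensor product of the individual cyclic spans, not merely sits inside it. For this we use that finite products of vectors of the form \(S_c(h_c)\xi^c\) (with the rest equal to \(\xi^{c'}\)) are total in the restricted tensor product, and that each such product equals \(T(k)\xi\) for a single \(k\in K^\beta\). One also needs each \(S_c\) to be well defined, which holds when the vectors \(\{v_i\}_{i\in c}\) are pairwise non-collinear, as inherited from \(G\). The orthogonality and coset bookkeeping in Step 2 are routine once the notion of type is set up.
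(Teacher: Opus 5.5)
Your proof is correct. Its skeleton matches the paper's: sort elementary tensors by ``type'', recognise the type-\(\beta\) piece as the tensor product, and note that the other pieces are its translates. The difference lies in how that piece is obtained.

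The paper first invokes Proposition~\ref{Proposition:G-closeness} (permissible movements along paths in a connected component). This shows that \(H(S(\alpha,G))\) is the closure of the \emph{full} subspaces \(V_{c_1}\otimes\cdots\otimes V_{c_n}\otimes\xi_{n+1}\otimes\cdots\). Its type-\(\beta\) piece \(H'\) is therefore the full tensor product of the \(V_{c(k)}\). Matching \(H'\) with \(\bigotimes_c S(\rest{\alpha}{\beta_c},G_c)\) then tacitly needs each \(S_c\) to fill all of \(V_c^{\otimes\beta_c}(\xi^c)\), which is again a consequence of connectivity.

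You bypass this entirely:
\begin{enumerate}
\item \(W_1\) is simply the \(K^\beta\)-cyclic span of \(\xi\).
\item The orthogonal coset decomposition follows from orthogonality of distinct types alone.
\item \(W_1\) is identified directly with the restricted tensor product of the cyclic spans \(H(S_c)\).
\end{enumerate}
You also make explicit the Mackey-type criterion that the paper leaves as ``clear''. Because connectivity is never used, your argument actually gives the equivalence for any grouping of \(I\) with \(v_i\perp v_j\) across groups; the components are the finest such grouping.

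What the paper's route buys in return is an explicit description of \(H(S(\alpha,G))\) inside \(V^{\otimes\infty}(\xi)\). It is a sum of full type subspaces indexed by \(G\)-close partitions, and the paper reuses this in Theorem~\ref{Theorem:SphEquiv}.
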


\begin{proof}
	Proposition~\ref{Proposition:G-closeness} gives us a description of the space \( H \) of \( S(\alpha, G) \). For any \( \alpha' \sim \alpha \) the corresponding \( K^{\alpha'} \)-invariant vector \( \xi' \) lies in \( H \) when \( \alpha' \) is \( G \)-close to \( \alpha \) and is orthogonal to \( H \) otherwise. It follows that \( H \) has a dense subspace
	\[
	\bigcup_{n\in \mathbb{N}} V_{c_1}\otimes V_{c_2}\otimes \cdots \otimes V_{c_n}\otimes \xi_{n+1}\otimes\cdots,
	\]
	where for any connected component \( c \)  there are exactly \( l(\alpha, G)_{c}^{(n)} \) indexes equal to \( c \) among \( c_1,\ldots,c_n \).

	Now, for any \( k\in \mathbb{N} \) write \( c(k) \) for the connected component such that \( k\in \alpha_i\subset \beta_{c(k)} \). Then the space \( H' \) that is the closure of
	\[
	\bigcup_{n\in \mathbb{N}} V_{c(1)}\otimes V_{c(2)}\otimes \cdots \otimes V_{c(n)}\otimes \xi_{n+1}\otimes\cdots
	\]
	is \( K^{\beta} \)-invariant and equivalent to the representation \( \bigotimes_{c\in C} S(\rest{\alpha}{\beta_c}, G_c) \).

	From this the equivalence between \( S(\alpha, G) \) and \(
	\Ind\bigotimes S(\rest{\alpha}{\beta_c}, G_c),
	\) becomes clear.
\end{proof}

\begin{theorem}\label{Theorem:SphEquiv}
	Two representations \( S(\alpha, G) \) and \( S(\beta, G') \) are equivalent if and only if all of the following conditions are satisfied
	\begin{enumerate}[(i)]
		\item \( \alpha\sim \beta \). We will assume that \( \alpha \) and \( \beta \) are indexed by the same set \( I \).
		\item \( G' = DGD^{-1} \) for some diagonal matrix \( D \) with \( |d_i| = 1 \). In particular, they define the same subsets \( I_{c} \subset I \) and the same notion of \( G \)-closeness.
		\item \( \alpha \) and \( \beta \) are \( G \)-close.
	\end{enumerate}
\end{theorem}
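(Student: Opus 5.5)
We prove the two directions separately. Throughout, the spherical function of \( S(\alpha,G) \) is read off from Theorem~\ref{Theorem:Spherical}, the \( K^{\beta} \)-invariant vectors in \( V^{\otimes\infty}(\xi) \) are described in the proof of Proposition~\ref{Proposition:G-closeness}, and admissibility is controlled by Proposition~\ref{Proposition:AdmisOrder}.

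\emph{Sufficiency.} Assume (i)--(iii).

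First, by part~3 of Theorem~\ref{Theorem:Spherical}, condition (ii) gives \( S(\beta,G')\cong S(\beta,G) \). So it suffices to show \( S(\alpha,G)\cong S(\beta,G) \) whenever \( \alpha,\beta \) are \( G \)-close. By definition of \( G \)-closeness it is enough to treat two elementary steps.

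\begin{itemize}
\item \textbf{Finite permutation.} If \( \beta=\sigma(\alpha) \) with \( \sigma\in\sym \), then \( T(\sigma)\xi \) is a cyclic \( K^{\beta} \)-invariant vector of \( S(\alpha,G) \). Its spherical function is \( \varphi(\sigma^{-1}g\sigma) \). Since \( d^{\beta}_{ij}(g)=d^{\alpha}_{ij}(\sigma^{-1}g\sigma) \), this equals the spherical function of \( S(\beta,G) \).
\item \textbf{Permissible movement.} For a single \( G \)-permissible movement, Proposition~\ref{Proposition:G-closeness} gives a nonzero \( K^{\beta} \)-invariant vector \( \eta=\bigotimes\eta_k \) in \( H(S(\alpha,G)) \). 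It is cyclic because \( S(\alpha,G) \) is irreducible. Its spherical function is \( \prod\langle v_j,v_i\rangle^{d^{\beta}_{ij}(g)} \) by the same computation as in part~2 of Theorem~\ref{Theorem:Spherical}. So \( S(\alpha,G)\cong S(\beta,G) \).
\end{itemize}

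Transitivity then finishes this direction.

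\emph{Necessity.} Suppose \( S(\alpha,G)\cong S(\beta,G') \). We derive the three conditions in order.

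\begin{itemize}
\item \textbf{Condition (i).} By Proposition~\ref{Proposition:AdmisOrder} (with \( n=0 \)), \( S(\alpha,G) \) is \( \beta \)-admissible, so \( [\beta]\succcurlyeq[\alpha] \). Symmetrically \( [\alpha]\succcurlyeq[\beta] \). Hence \( \alpha\sim\beta \), and we may index both partitions by \( I \) with \( \alpha^{(N)}_i=\beta^{(N)}_i \) for some \( N \).
\item \textbf{Condition (ii).} Let \( \eta \) be the \( K^{\beta} \)-invariant vector inside \( H(S(\alpha,G))\subset V^{\otimes\infty}(\xi) \). It is unique up to scalar, namely \( \eta_k=v_i \) for \( k\in\beta_i \). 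Computing \( \langle T(g)\eta,\eta\rangle \) as in part~2 of Theorem~\ref{Theorem:Spherical} gives \( \prod\langle v_j,v_i\rangle^{d^{\beta}_{ij}(g)} \). This must equal the spherical function of \( S(\beta,G') \), i.e.\ \( \prod (g'_{ij})^{d^{\beta}_{ij}(g)} \). Evaluating on the cycles \( \mathfrak{s}[i_1,\dots,i_p] \) of \( \G^{\beta} \) yields the cycle identities~\eqref{Equation:Cycles}. The Lemma in the proof of Theorem~\ref{Theorem:Spherical} then gives \( G'=DGD^{-1} \).
\item \textbf{Condition (iii).} By what we proved, \( S(\beta,G')\cong S(\beta,G) \), so \( H(S(\alpha,G)) \) must contain a nonzero \( K^{\beta} \)-invariant vector, and that vector is forced to be \( \eta \). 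It remains to show that \( \eta\in H(S(\alpha,G)) \) implies \( G \)-closeness. This is the main obstacle.
\end{itemize}

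\emph{Plan for the main step.} We use the orthogonal decomposition \( V=\bigoplus_c V_c \) from the preceding proposition. The dense subspace of \( H=H(S(\alpha,G)) \) described there consists of vectors in \( V_{c_1}\otimes\cdots\otimes V_{c_M}\otimes\xi_{M+1}\otimes\cdots \), where the multiset of components \( \{c_1,\dots,c_M\} \) has counts \( l(\alpha,G)^{(M)} \).

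Now suppose \( l(\beta,G)^{(M)}\neq l(\alpha,G)^{(M)} \) for all large \( M \). Then for every large \( M \), \( \eta \) lies in a product of \( V_{c'} \)'s whose count vector differs from \( l(\alpha,G)^{(M)} \). Such tensor products are orthogonal to every summand of the dense subspace: if the count vectors differ, some coordinate \( k\le M \) must pair \( V_c \) against \( V_{c'} \) with \( c\neq c' \). Hence \( \eta\perp H \), a contradiction.

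Therefore \( l(\alpha,G)^{(M)}=l(\beta,G)^{(M)} \) for large \( M \). By the criterion stated just before the proposition, this means \( \alpha \) and \( \beta \) are \( G \)-close.

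The delicate point is checking that orthogonality, in particular that the count vectors stabilize consistently as \( M \) grows, since both partitions agree beyond \( N \).
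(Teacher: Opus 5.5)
Your proof is correct and follows essentially the same route as the paper. You get \( \alpha\sim\beta \) from the admissibility order, \( G'=DGD^{-1} \) from comparing spherical functions and applying the cycle lemma, and \( G \)-closeness from the uniqueness of the \( K^{\beta} \)-invariant vector in \( V^{\otimes\infty}(\xi) \) together with the orthogonality coming from \( V=\bigoplus_c V_c \). Beyond that, you spell out the ``if'' direction (finite permutations and permissible movements) that the paper calls clear, your use of the explicit vector \( \eta \) in step (ii) makes the paper's brief comparison of spherical functions on \( S_N(\infty) \) rigorous, and the ``delicate point'' you flag is harmless because \( l(\beta,G)^{(M)}-l(\alpha,G)^{(M)} \) is constant for \( M\geqslant N \).
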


\begin{proof}
	The ``if'' part of the proposition is clear. Now suppose that \( S(\alpha, G) \) and \( S(\beta, G') \) are equivalent. It immediately follows that \( \alpha \) and \( \beta \) must be equivalent. Suppose that \( N \) is such that \( \alpha^{(N)} = \beta^{(N)} \). Then for any \( x\in \sym[N] \) the spherical functions \( \varphi^{\beta}(x) = \varphi^{\alpha}(x) \), so \( G\ = DGD^{-1} \).

	Now, from the previous arguments we see that the subspaces \( H(S(\alpha, G)) \) and \( H(S(\beta, G)) \) of \( V^{\otimes\infty}(\xi) \) coincide whenever \( \alpha \) and \( \beta \) are \( G \)-close and orthogonal otherwise. In the latter case the representations are not equivalent since in \(V^{\otimes\infty}(\xi) \) there is only one (up to multiplication) \( K^{\beta} \)-invariant vector.
\end{proof}
We are ready to prove the following
\begin{theorem}[cf. {\cite[Theorem~6.11]{Nessonov}}]
	Two representations \( T_1 = T(\alpha, n, \lambda, G) \) and \( T_2 = T(\beta, m, \mu, G') \) are equivalent if and only if
	\begin{enumerate}[(i)]
		\item \( \alpha\sim\beta \);
		\item \( n = m \);
		\item \( \lambda = \mu \);
		\item \( G' = DGD^{-1} \);
		\item partitions \( \alpha^{(n)} \) and \( \beta^{(n)} \) of \( \bar{[n]} \) are \( G\)-close.
	\end{enumerate}
\end{theorem}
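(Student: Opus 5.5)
The plan is to reduce everything to the semigroup description obtained in the proof of Theorem~\ref{Theorem:Admissible1} and to Theorem~\ref{Theorem:SphEquiv}. The ``if'' direction should be direct. If \( \alpha\sim\beta \), \( n=m \), \( \lambda=\mu \), \( G'=DGD^{-1} \), and \( \alpha^{(n)},\beta^{(n)} \) are \( G \)-close, then Theorem~\ref{Theorem:SphEquiv} (applied to \( \sym[n] \cong S(\bar{[n]}) \) in place of \( \sym \)) shows that \( S(\alpha^{(n)},G)\simeq S(\beta^{(n)},G') \) as representations of \( \sym[n] \). Hence \( R^{\lambda}\otimes S(\alpha^{(n)},G)\simeq R^{\mu}\otimes S(\beta^{(n)},G') \) as representations of \( L=S(n)\sym[n] \), and inducing to \( \sym \) preserves equivalence.

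For the ``only if'' direction, suppose \( T_1\simeq T_2 \).
\begin{enumerate}[1.]
\item By Proposition~\ref{Proposition:AdmisOrder}, \( T_1 \) is \( \gamma \)-admissible exactly when \( [\gamma]\succcurlyeq[\alpha] \), and likewise for \( T_2 \) with \( \beta \). So \( [\alpha]\succcurlyeq[\beta] \) and \( [\beta]\succcurlyeq[\alpha] \), which gives \( \alpha\sim\beta \).
\item By Proposition~\ref{Proposition:Depth}, both representations have depth equal to \( n \) and to \( m \) respectively, so \( n=m \). As remarked after Proposition~\ref{Proposition:AdmisOrder}, depth depends only on the class \( [\alpha] \), so this is legitimate.
\item Since \( \alpha\sim\beta \), choose \( N\ge n \) with \( \alpha^{(N)}=\beta^{(N)} \), and work with \( \gamma=\alpha \) as the reference partition. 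The proof of Theorem~\ref{Theorem:IrrAndAdm} shows that \( H^{\alpha}_n(T_1)\cong H(R^\lambda)\otimes\xi \). By the argument of Theorem~\ref{Theorem:Admissible1}, \( \T^{\alpha}_n \) kills the ideal \( \mathfrak I \) and restricts on \( \G_0\cong S(n)\times\G^{\alpha^{(n)}} \) to \( R^{\lambda}\otimes\chi_1 \), where \( \chi_1 \) is the spherical character of \( S(\alpha^{(n)},G) \).
\item The delicate point is \( T_2 \). Its \( K^{\alpha}_n \)-invariants need not be the ``obvious'' space, because \( \alpha^{(n)}\neq\beta^{(n)} \) in general. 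By Proposition~\ref{Proposition:Depth} the space is nonzero only if depth permits. Redoing the support computation of Theorem~\ref{Theorem:IrrAndAdm}, a \( K^{\alpha}_n \)-invariant \( f \) in \( H(T_2) \) is supported on \( L \), and \( f(1) \) lies in \( H(R^\mu)\otimes H(S(\beta^{(n)},G'))^{K^{\alpha^{(n)}}} \).
\item This last space is nonzero only if \( S(\beta^{(n)},G') \) has a \( K^{\alpha^{(n)}} \)-invariant vector. Since \( \alpha^{(n)}\sim\beta^{(n)} \), it then contains the corresponding spherical vector, so \( S(\beta^{(n)},G')\simeq S(\alpha^{(n)},G'') \) for a suitable \( G'' \). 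Combined with Theorem~\ref{Theorem:SphEquiv}, this already forces \( G''=G' \) up to diagonal conjugation, and \( \alpha^{(n)},\beta^{(n)} \) to be \( G' \)-close.
\item Then \( \T^{\alpha}_n \) for \( T_2 \) restricts on \( \G_0 \) to \( R^{\mu}\otimes\chi_2 \). Equivalence \( T_1\simeq T_2 \) forces \( \T^{\alpha}_n(T_1)\simeq\T^{\alpha}_n(T_2) \), hence \( \lambda=\mu \) and \( \chi_1=\chi_2 \).
\item From \( \chi_1=\chi_2 \) we get \( S(\alpha^{(n)},G)\simeq S(\alpha^{(n)},G') \). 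Theorem~\ref{Theorem:SphEquiv} then gives \( G'=DGD^{-1} \), and \( G \)-closeness coincides with \( G' \)-closeness. Together with step~5 this yields condition (v).
\end{enumerate}

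The main obstacle is the case in step~5 where \( H(T_2)^{K^\alpha_n}=0 \). There the comparison must use the fact that \( H^\alpha_n(T_1)\neq0 \) forces \( H^\alpha_n(T_2)\neq0 \). The step that needs care is the claim that the \( K^{\alpha}_n \)-invariants of \( T_2 \) are governed only by \( K^{\alpha^{(n)}} \)-invariants of \( S(\beta^{(n)},G') \). The subtlety is that \( K^\alpha_n\not\subset K^\beta_n \), so the support argument has to be repeated with the orbit description for \( \alpha \) rather than quoted. I expect it goes through, because that argument only used that the \( K^{\alpha}_n \)-orbit of \( g\notin L \) meets infinitely many cosets \( Lx \), which holds for any partition into infinite blocks.
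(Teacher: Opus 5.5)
Your proposal is correct and follows essentially the paper's route: you get \( \alpha\sim\beta \) from Proposition~\ref{Proposition:AdmisOrder}, \( n=m \) from depth, and \( G \)-closeness from the support-on-\( L \) description of invariants taken with respect to the other partition (you compute \( K^{\alpha}_n \)-invariants in \( T_2 \), while the paper computes \( K^{\beta}_n \)-invariants in \( T_1 \)). Your additional comparison of \( \T^{\alpha}_n \) restricted to \( \G_0\cong S(n)\times\G^{\alpha^{(n)}} \) makes explicit how \( \lambda=\mu \) and \( G'=DGD^{-1} \) follow, which the paper's short proof leaves implicit.
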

\begin{proof}
	We have already seen that if \( T_1 \) and \( T_2 \) are equivalent, then \( \alpha\sim \beta \) and \( n = m \). The space \( H(T_1) \) must contain \( K_{n}^{\beta} \)-invariant vectors \( f \) uniquely determined by \( f(1)\in H(R^{\lambda})\otimes H(S(\alpha^{(n), G})^{K_{n}^{\beta}}) \), so \( \alpha^{(n)} \) and \( \beta^{(n)} \) must be \( G \)-close and \( G' = DGD^{-1} \) must hold.
\end{proof}

\section{Representations admissible with respect to two partitions}\label{Section:Intersection}
The classification results also give us the conditions under which an irreducible unitary representation \( T \) is \( \alpha \)- and \( \beta \)-admissible for different partitions \( \alpha, \beta \).
We will denote by \( \IrrAdm(\alpha) \) the set of equivalence classes of irreducible \( \alpha \)-admissible representations.

If \( T \) is \( \alpha \)-admissible, then \( T = T(\gamma, n, \lambda, G) \) for some \( [\gamma]\preccurlyeq [\alpha] \). It is also \( \beta\)-admissible if and only if \( [\gamma]\preccurlyeq [\beta] \).

\begin{definition}
	Let \( \alpha, \beta\in \mathcal{P} \). We say that \( \gamma \) is the \emph{infimum} of \( \alpha \) and \( \beta \) if \( \gamma\preccurlyeq \alpha, \beta\) and \( \gamma'\preccurlyeq \gamma \) for any other \( \gamma'\preccurlyeq \alpha, \beta \). We will denote this infimum by \( \alpha \wedge \beta \).
\end{definition}

\begin{proposition}
	The infimum \( \alpha \wedge \beta \) always exists.
\end{proposition}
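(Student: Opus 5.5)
The plan is to construct \( \alpha\wedge\beta \) explicitly as the finest common coarsening of \( \alpha \) and \( \beta \), in the spirit of the graph \( \Gamma(G) \) from the proof of Theorem~\ref{Theorem:Spherical}. Recall that \( \gamma\preccurlyeq\alpha \) means that every part of \( \gamma \) is a disjoint union of parts of \( \alpha \). Let \( \alpha=(\alpha_i)_{i\in I} \) and \( \beta=(\beta_j)_{j\in J} \). Define a relation on \( \mathbb{N} \) by \( x\approx y \) if \( x,y \) lie in the same part of \( \alpha \) or in the same part of \( \beta \). Let \( \equiv \) be the equivalence relation generated by \( \approx \): we set \( x\equiv y \) when there is a finite chain \( x=z_0,z_1,\ldots,z_r=y \) with \( z_{s-1}\approx z_s \) for all \( s \). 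Let \( \gamma=(\gamma_c)_{c\in C} \) be the set of equivalence classes.

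First I check that \( \gamma \) is a partition in the sense of the paper. Its parts are pairwise disjoint and cover \( \mathbb{N} \), and each part is a subset of \( \mathbb{N} \), hence countable. Each class contains a whole part \( \alpha_i \). Since the parts of \( \alpha \) are infinite, so are the parts of \( \gamma \), so the infinite-block convention is respected. Equivalently, one can form the bipartite graph on \( I\sqcup J \) with an edge \( i - j \) whenever \( \alpha_i\cap\beta_j\neq\emptyset \). Its connected components are exactly the classes \( \gamma_c \), with \( \gamma_c=\bigsqcup_{i\in I_c}\alpha_i=\bigsqcup_{j\in J_c}\beta_j \).

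Next come the two defining properties. Each class of \( \equiv \) is saturated with respect to \( \alpha \): if \( x\in\gamma_c \) and \( x\in\alpha_i \), then \( \alpha_i\subset\gamma_c \). Hence \( \gamma_c=\bigsqcup_{i\in I_c}\alpha_i \), so \( \gamma\preccurlyeq\alpha \), and the same argument gives \( \gamma\preccurlyeq\beta \). For minimality, let \( \gamma' \) be any partition with \( \gamma'\preccurlyeq\alpha \) and \( \gamma'\preccurlyeq\beta \). Then each part of \( \gamma' \) is a union of \( \alpha \)-parts and also a union of \( \beta \)-parts. So if \( x\approx y \), then \( x \) and \( y \) lie in the same part of \( \gamma' \). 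By induction along chains, the same holds whenever \( x\equiv y \). Therefore every part of \( \gamma' \) is a union of classes \( \gamma_c \), that is, \( \gamma'\preccurlyeq\gamma \). This shows \( \gamma=\alpha\wedge\beta \). It is unique up to re-indexing, which is the paper's convention for \( = \).

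There is no serious obstacle here. The only points needing care are the direction of \( \preccurlyeq \) (coarser partitions are smaller) and the observation that a union of arbitrarily many \( \alpha \)-parts is still countable, being a subset of \( \mathbb{N} \). If the intended statement is instead at the level of equivalence classes \( [\alpha] \), the same construction should work. Taking \( [\alpha\wedge\beta] \), one checks that for any \( \gamma' \) with \( \gamma'^{(Y)}\preccurlyeq\alpha^{(Y)},\beta^{(Y)} \) for some finite \( Y \), the partition \( \gamma'^{(Y)} \) is coarser than \( (\alpha\wedge\beta)^{(Y')} \) for a suitable finite \( Y'\supset Y \). That is the one place where a little extra bookkeeping might be needed.
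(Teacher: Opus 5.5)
Your proof is correct and is essentially the paper's. The paper takes the connected components of the graph on \( I \) that joins \( i,i' \) when some \( \beta_j \) meets both \( \alpha_i \) and \( \alpha_{i'} \), which is the projection of your bipartite graph onto \( I \), and simply declares the result to be \( \alpha\wedge\beta \); you additionally verify \( \gamma\preccurlyeq\alpha,\beta \) and minimality. Your closing remark about equivalence classes is false, though: in Example~\ref{Example:Infimum} one has \( \alpha\wedge\beta=\gamma(0) \), the one-block partition, yet \( [\gamma(1)]\preccurlyeq[\alpha],[\beta] \) while \( [\gamma(1)]\not\preccurlyeq[\gamma(0)] \), so \( [\alpha\wedge\beta] \) need not be the infimum of \( [\alpha],[\beta] \), and in that example no such infimum exists at all.
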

\begin{proof}
	Let \( \alpha\in \mathcal{P}^{I} \), \( \beta \in \mathcal{P}^{J} \) and \( \Gamma \) be the graph with vertices indexed by \( I \), such that vertices \( i, i' \) are connected by an edge if there exists \( j\in J \) such that \( \beta_j \) intersects with both \( \alpha_i, \alpha_{i'} \). Let \( C \) be the set of connected components of \( \Gamma \), and put for any \( c\in C \)
	\[
	\gamma_c = \bigsqcup_{i\in c} \alpha_i.
	\]
	It is clear that \( \gamma = \alpha\wedge \beta\).
\end{proof}

\begin{definition}
We will say that an equivalence class \( [\gamma] \) is the \emph{infimum} of \( [\alpha] \) and \( [\beta] \) if \( [\gamma]\preccurlyeq [\alpha], [\beta]\) and \( [\gamma']\preccurlyeq [\gamma] \) for any other \( [\gamma']\preccurlyeq [\alpha], [\beta] \).
\end{definition}
 Unfortunately, the infimum of equivalence classes of partition may not exist. When it does we will denote it by \( [\alpha] \wedge [\beta] \).

\begin{example}~\label{Example:Infimum}
Here we will provide an example of two equivalence classes \( [\alpha], [\beta] \) such that the infimum does not exist. More explicitly, we will construct a series \( [\gamma(n)] \) of equivalence classes of partitions satisfying the following conditions:
\begin{enumerate}
\item \(
[\gamma(0)] \prec [\gamma(1)] \prec \cdots \preccurlyeq [\alpha], [\beta];
\)
\item  for any other \( [\gamma]\preccurlyeq [\alpha], [\beta] \) there exists \( n \) such that \( [\gamma] \prec [\gamma(n)] \).
\end{enumerate}

Let \( \alpha \in \mathcal{P}^{I} \) be a partition with countable index set \( I = \{1,2,\ldots\} \), suppose that \( \alpha_i = (a_i^1<a_i^2<\cdots) \), and suppose also that \( I \) is ordered in such a way that \( a_1^1<a_2^1<\ldots \). Note that this defines the bijection between \(\mathbb{N} \) and \( \mathbb{N} \times \mathbb{N} \) that sends \( a_i^j \) to the pair \( (i,j) \). It will be convenient for us to visualise the partition \( \alpha \) as follows (boxes of the same colour belong to the same part~\( \alpha_i \), the numbers \( a_i^j \) grow from left to right in every row and from top to bottom in the first column).

\[
\ytableausetup{nosmalltableaux}
\alpha =
\begin{ytableau}
	*(white)& *(white) &*(white) &*(white) & \none[\dots] \\
	*(mylightgray)& *(mylightgray) &*(mylightgray) &*(mylightgray) & \none[\dots] \\
	*(mygray)& *(mygray) &*(mygray) &*(mygray) & \none[\dots] \\
	\none[\vdots]&\none[\vdots]&\none[\vdots]&\none[\vdots]&\none
\end{ytableau}
\]
Now define \( \beta = (\beta_{i})_{i\in I} \) in the following way
\[
\beta_1 = \alpha_1\setminus \{a_1^1\}, \quad \beta_i = \left(\alpha_i \setminus\{a_i^1\}\right) \cup \{a_{i-1}^1\},\ i\neq 1.
\]
We can visualise \( \beta \) in the same manner.
\[
\ytableausetup{nosmalltableaux}
\beta =
 \begin{ytableau}
 	*(mylightgray)& *(white) &*(white) &*(white) & \none[\dots] \\
 	*(mygray)& *(mylightgray) &*(mylightgray) &*(mylightgray) & \none[\dots] \\
 	*(mydarkgray)& *(mygray) &*(mygray) &*(mygray) & \none[\dots] \\
 	\none[\vdots]&\none[\vdots]&\none[\vdots]&\none[\vdots]&\none
 \end{ytableau}
\]

For any \( n \geqslant 0 \) let us denote by \( \gamma(n) \in \mathcal{P}^{[n+1]} \) the partition of \( \mathbb{N} \) into \( n+1 \) parts defined as follows:
\[
 \gamma(n)_{i} = \alpha_i,\ i\leqslant n, \quad \gamma(n)_{n+1} = \mathbb{N}\setminus \bigcup_{i = 1}^{n}\alpha_i.
\]
For example, for small \( n \) we have
\begin{gather*}
\gamma(0) =
	\begin{ytableau}
		*(white)& *(white) &*(white) &*(white) & \none[\dots] \\
		*(white)& *(white) &*(white) &*(white) & \none[\dots] \\
		*(white)& *(white) &*(white) &*(white) & \none[\dots] \\
		*(white)& *(white) &*(white) &*(white) & \none[\dots] \\
		\none[\vdots]&\none[\vdots]&\none[\vdots]&\none[\vdots]&\none
	\end{ytableau},
\quad
\gamma(1) =
\begin{ytableau}
	*(white)& *(white) &*(white) &*(white) & \none[\dots] \\
	*(mylightgray)& *(mylightgray) &*(mylightgray) &*(mylightgray) & \none[\dots] \\
	*(mylightgray)& *(mylightgray) &*(mylightgray) &*(mylightgray) & \none[\dots] \\
	*(mylightgray)& *(mylightgray) &*(mylightgray) &*(mylightgray) & \none[\dots] \\
	\none[\vdots]&\none[\vdots]&\none[\vdots]&\none[\vdots]&\none
\end{ytableau},\quad
\gamma(2) =
\begin{ytableau}
	*(white)& *(white) &*(white) &*(white) & \none[\dots] \\
	*(mylightgray)& *(mylightgray) &*(mylightgray) &*(mylightgray) & \none[\dots] \\
	*(mygray)& *(mygray) &*(mygray) &*(mygray) & \none[\dots] \\
	*(mygray)& *(mygray) &*(mygray) &*(mygray) & \none[\dots] \\
	\none[\vdots]&\none[\vdots]&\none[\vdots]&\none[\vdots]&\none
\end{ytableau}.
\end{gather*}

One can note that \( \gamma(n)^{(a_n^1)} = \alpha^{(a_n^1)} \wedge \beta^{(a_n^1)} \).

Indeed, it is clear that the finest partition that is coarser then both \( \alpha \) and \( \beta \) is the trivial partition \( \gamma(0) \). Once you remove the first \( a_n^1 \) elements (i.e., the first \( n \) elements in the first column and some finite number of elements in other columns) the sets \( \alpha_i^{(a_n^1)} \) and \( \beta_i^{(a_n^1)} \) coincide for \( i = 1,\ldots, n \), and we have \( \alpha^{(a_n^1)} \wedge \beta^{(a_n^1)} = \gamma(n)^{(a_n^1)} \).

 So we constructed an increasing sequence of equivalence classes that are all coarser then \( [\alpha] \) and \( [\beta] \):
 \[
  [\gamma(1)]\prec[\gamma(2)]\prec \cdots \prec [\alpha], [\beta].
 \]
Now suppose that \( [\gamma]\preccurlyeq [\alpha], [\beta] \) is any equivalence class that is coarser then both \( [\alpha], [\beta] \). Then there exists a number \( N \) such that \( \gamma^{(N)}\preccurlyeq \alpha^{(N)}, \beta^{(N)} \).

 But the infimum \( \alpha^{(N)}\wedge \beta^{(N)} \) is \( \gamma(n)^{(N)} \) for \( n \) satisfying  \( a^1_n\leqslant N < a^1_{n+1} \), so we have \( [\gamma]\preccurlyeq [\gamma(n)]\prec [\gamma(n+1)] \).
\end{example}

The argument used in this example can be applied in general situation.

Let \( \alpha \), \( \beta \) be any partitions, and suppose that \( \{[\gamma(n)]\}_{n\in \mathbb{N}} \) is the sequence of equivalent classes of partitions defined by \( \gamma(n)^{(n)} = \alpha^{(n)} \wedge \beta^{(n)} \). Then \( \{[\gamma(n)]\}_{n\in \mathbb{N}} \) satisfies the conditions from the Example~\ref{Example:Infimum}:

\begin{enumerate}
	\item \(
	[\gamma(0)] \preccurlyeq [\gamma(1)] \preccurlyeq \cdots \preccurlyeq [\alpha], [\beta];
	\)
	\item  for any other \( [\gamma]\preccurlyeq [\alpha], [\beta] \) there exists \( n \) such that \( [\gamma] \preccurlyeq [\gamma(n)] \).
\end{enumerate}

It follows that the infimum exists if and only if the sequence \( [\gamma(n)] \)  stabilises. For example, it happens when both \( \alpha, \beta \) are partitions of \( \mathbb{N} \) into finitely many parts. Indeed, in this case we may take \( N \) large enough so that \( \alpha_i\cap \beta_j \subset [N] \) for any \( \alpha_i \) and \( \beta_j \) such that their intersection is finite. Then \( [\gamma(n)] = [\gamma(n+1)] \) for all \( n>N \).

\begin{proposition}
	Let \( \alpha\), \(\beta \) be two partitions and \( \gamma(n) \) be as above. Then
	\[
	\bigcup_{n\in \mathbb{N}} \IrrAdm(\gamma(n))
	\]
	is the set of all irreducible representations that are admissible with respect to both \( \alpha \) and \( \beta \).

	In particular, when the infimum \( [\gamma] = [\alpha] \wedge [\beta] \) exists, the set of all irreducible representations admissible with respect to both \( \alpha \) and \( \beta \) is \( \IrrAdm(\gamma) \).
\end{proposition}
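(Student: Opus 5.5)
We prove the two inclusions separately. The common tool is the classification of Theorem~\ref{Theorem:Admissible1} together with Proposition~\ref{Proposition:AdmisOrder}. Together they say that an irreducible representation \( T \) is \( \delta \)-admissible if and only if \( T\simeq T(\gamma', m, \lambda, G) \) for some \( [\gamma']\preccurlyeq[\delta] \). The admissibility condition depends only on the class \( [\gamma'] \), not on the remaining parameters \( m, \lambda, G \).

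\emph{Inclusion \( \subseteq \).} Let \( T\in \IrrAdm(\gamma(n)) \) for some \( n \). Each \( \gamma(n) \) satisfies \( \gamma(n)^{(n)} = \alpha^{(n)}\wedge\beta^{(n)} \preccurlyeq \alpha^{(n)}, \beta^{(n)} \). By the definition of the order on classes, this gives \( [\gamma(n)]\preccurlyeq[\alpha] \) and \( [\gamma(n)]\preccurlyeq[\beta] \). The paper noted that \( [\alpha]\succcurlyeq[\beta] \) implies \( \Adm(\alpha)\supset\Adm(\beta) \). Applying this twice, \( T \) is both \( \alpha \)- and \( \beta \)-admissible.

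\emph{Inclusion \( \supseteq \).} Let \( T \) be irreducible and admissible with respect to both \( \alpha \) and \( \beta \).
\begin{enumerate}
\item By Theorem~\ref{Theorem:Admissible1} applied with \( \alpha \), write \( T\simeq T(\gamma', m,\lambda,G) \) with \( [\gamma']\preccurlyeq[\alpha] \).
\item Since \( T \) is also \( \beta \)-admissible, Proposition~\ref{Proposition:AdmisOrder} gives \( [\gamma']\preccurlyeq[\beta] \).
\item So \( [\gamma'] \) is a common lower bound. By property (2) of the sequence \( [\gamma(n)] \), stated just before the proposition, there is \( n \) with \( [\gamma']\preccurlyeq[\gamma(n)] \).
\item Proposition~\ref{Proposition:AdmisOrder} again, applied with \( \gamma(n) \) in place of \( \beta \), gives \( T\in\IrrAdm(\gamma(n)) \).
\end{enumerate}

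The step needing the most care is property (2) itself, which the paper asserts in general after the example. To justify it we take \( N \) with \( \gamma'^{(N)}\preccurlyeq\alpha^{(N)},\beta^{(N)} \). Since \( \gamma'^{(N)} \) is coarser than both restrictions, it is coarser than their infimum, so \( \gamma'^{(N)}\preccurlyeq\alpha^{(N)}\wedge\beta^{(N)} = \gamma(N)^{(N)} \), and hence \( [\gamma']\preccurlyeq[\gamma(N)] \).

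Here "coarser than the infimum" uses the connected-component description of \( \wedge \) from the proposition on existence of the infimum. That description applies to partitions of the infinite set \( \bar{[N]} \), since the argument never used that the ground set is \( \mathbb{N} \) itself.

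One subtlety: the infimum may contain finite parts, whereas partitions are defined to have countable (infinite) parts. I would handle this by absorbing finitely many stray points into an infinite part. This changes nothing up to equivalence classes.

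\emph{The special case.} When the infimum \( [\gamma]=[\alpha]\wedge[\beta] \) exists, the sequence \( [\gamma(n)] \) stabilises at \( [\gamma] \), as discussed before the proposition. The admissible categories \( \IrrAdm(\gamma(n)) \) increase with \( n \), again by the remark that \( [\gamma(n)]\preccurlyeq[\gamma(n+1)] \) implies \( \Adm(\gamma(n))\subset\Adm(\gamma(n+1)) \). Hence the union equals \( \IrrAdm(\gamma) \).
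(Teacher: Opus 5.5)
Your proposal is correct and follows the paper's argument: Theorem~\ref{Theorem:Admissible1} together with Proposition~\ref{Proposition:AdmisOrder} reduces admissibility for both $\alpha$ and $\beta$ to the condition $[\gamma']\preccurlyeq[\alpha],[\beta]$, and the cofinality property (2) of the sequence $[\gamma(n)]$ then finishes the proof. One small correction: your concern about finite parts does not arise, because every part of $\alpha^{(N)}\wedge\beta^{(N)}$ is a union of the infinite sets $\alpha_i^{(N)}$, so no absorption step is needed.
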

The proof follows directly from the Proposition~\ref{Proposition:AdmisOrder} and the reasonings above.

Note that this set can be described in other terms.

Let \( K_n := K_n^{\gamma(n)} \). These groups form a decreasing sequence
\[
K_0 \supset K_1 \supset \cdots
\]
that satisfy the conditions~\ref{Definition:Topology1}~---~\ref{Definition:Topology3} from Definition~\ref{Definition:Topology}. So, we may define the \( \{K_n\} \)-topology in the usual way. The set of equivalence classes of irreducible representations continuous with respect to this topology is exactly \( \bigcup_{n\in \mathbb{N}} \IrrAdm(\gamma(n)) \).

It follows from the fact that \( K_N^{\gamma(n)} \) is sandwiched between \( K_n \) and \( K_N \) (depending on what is greater, \( n \) or \( N \)).

\printbibliography

\end{document}